\documentclass{article}
\usepackage[%
journal=XXX,    %% Replace XXX by one of the above journal codes.
lang=american,   %% Change to `american' if you use American English.
]{ems-journal}

\usepackage{planarforest}
\newcommand{\forestA}{
\tikz[planar forest default, planar forest, ] {
  \node [  b,  label={[label distance=-1mm]0:{\scriptsize{}}}] at (0.0, 0.0) {}
   ;
 }}

\newcommand{\forestB}{
\tikz[planar forest default, planar forest, ] {
  \node [  b,  label={[label distance=-1mm]0:{\scriptsize{}}}] at (0.0, 0.0) {}
  child {   node [    b,    label={[label distance=-1mm]0:{\scriptsize{}}}  ] at (0.0, 1.0) {}
     edge from parent[    -,    solid, solid,    draw=black  ]   node [!l,right] {\scriptsize{}} }
 ;
 }}

\newcommand{\forestC}{
\tikz[planar forest default, planar forest, ] {
  \node [  b,  label={[label distance=-1mm]0:{\scriptsize{}}}] at (0.0, 0.0) {}
  child {   node [    b,    label={[label distance=-1mm]0:{\scriptsize{}}}  ] at (-0.5, 1.0) {}
     edge from parent[    -,    solid, solid,    draw=black  ]   node [!l,right] {\scriptsize{}} }
child {   node [    b,    label={[label distance=-1mm]0:{\scriptsize{}}}  ] at (0.5, 1.0) {}
     edge from parent[    -,    solid, solid,    draw=black  ]   node [!l,right] {\scriptsize{}} }
 ;
 }}

\newcommand{\forestD}{
\tikz[planar forest default, planar forest, ] {
  \node [  b,  label={[label distance=-1mm]0:{\scriptsize{}}}] at (0.0, 0.0) {}
  child {   node [    b,    label={[label distance=-1mm]0:{\scriptsize{}}}  ] at (0.0, 1.0) {}
  child {   node [    b,    label={[label distance=-1mm]0:{\scriptsize{}}}  ] at (0.0, 1.0) {}
     edge from parent[    -,    solid, solid,    draw=black  ]   node [!l,right] {\scriptsize{}} }
   edge from parent[    -,    solid, solid,    draw=black  ]   node [!l,right] {\scriptsize{}} }
 ;
 }}

\newcommand{\forestE}{
\tikz[planar forest default, planar forest, ] {
  \node [  b,  label={[label distance=-1mm]0:{\scriptsize{}}}] at (0.0, 0.0) {}
  child {   node [    b,    label={[label distance=-1mm]0:{\scriptsize{}}}  ] at (-1.0, 1.0) {}
     edge from parent[    -,    solid, solid,    draw=black  ]   node [!l,right] {\scriptsize{}} }
child {   node [    b,    label={[label distance=-1mm]0:{\scriptsize{}}}  ] at (0.0, 1.0) {}
     edge from parent[    -,    solid, solid,    draw=black  ]   node [!l,right] {\scriptsize{}} }
child {   node [    b,    label={[label distance=-1mm]0:{\scriptsize{}}}  ] at (1.0, 1.0) {}
     edge from parent[    -,    solid, solid,    draw=black  ]   node [!l,right] {\scriptsize{}} }
 ;
 }}

\newcommand{\forestF}{
\tikz[planar forest default, planar forest, ] {
  \node [  b,  label={[label distance=-1mm]0:{\scriptsize{}}}] at (0.0, 0.0) {}
  child {   node [    b,    label={[label distance=-1mm]0:{\scriptsize{}}}  ] at (-0.5, 1.0) {}
     edge from parent[    -,    solid, solid,    draw=black  ]   node [!l,right] {\scriptsize{}} }
child {   node [    b,    label={[label distance=-1mm]0:{\scriptsize{}}}  ] at (0.5, 1.0) {}
  child {   node [    b,    label={[label distance=-1mm]0:{\scriptsize{}}}  ] at (0.0, 1.0) {}
     edge from parent[    -,    solid, solid,    draw=black  ]   node [!l,right] {\scriptsize{}} }
   edge from parent[    -,    solid, solid,    draw=black  ]   node [!l,right] {\scriptsize{}} }
 ;
 }}

\newcommand{\forestG}{
\tikz[planar forest default, planar forest, ] {
  \node [  b,  label={[label distance=-1mm]0:{\scriptsize{}}}] at (0.0, 0.0) {}
  child {   node [    b,    label={[label distance=-1mm]0:{\scriptsize{}}}  ] at (0.0, 1.0) {}
  child {   node [    b,    label={[label distance=-1mm]0:{\scriptsize{}}}  ] at (-0.5, 1.0) {}
     edge from parent[    -,    solid, solid,    draw=black  ]   node [!l,right] {\scriptsize{}} }
child {   node [    b,    label={[label distance=-1mm]0:{\scriptsize{}}}  ] at (0.5, 1.0) {}
     edge from parent[    -,    solid, solid,    draw=black  ]   node [!l,right] {\scriptsize{}} }
   edge from parent[    -,    solid, solid,    draw=black  ]   node [!l,right] {\scriptsize{}} }
 ;
 }}

\newcommand{\forestH}{
\tikz[planar forest default, planar forest, ] {
  \node [  b,  label={[label distance=-1mm]0:{\scriptsize{}}}] at (0.0, 0.0) {}
  child {   node [    b,    label={[label distance=-1mm]0:{\scriptsize{}}}  ] at (0.0, 1.0) {}
  child {   node [    b,    label={[label distance=-1mm]0:{\scriptsize{}}}  ] at (0.0, 1.0) {}
  child {   node [    b,    label={[label distance=-1mm]0:{\scriptsize{}}}  ] at (0.0, 1.0) {}
     edge from parent[    -,    solid, solid,    draw=black  ]   node [!l,right] {\scriptsize{}} }
   edge from parent[    -,    solid, solid,    draw=black  ]   node [!l,right] {\scriptsize{}} }
   edge from parent[    -,    solid, solid,    draw=black  ]   node [!l,right] {\scriptsize{}} }
 ;
 }}

\newcommand{\forestI}{
\tikz[planar forest default, planar forest, ] {
  \node [  b,  label={[label distance=-1mm]0:{\scriptsize{}}}] at (0.0, 0.0) {}
   ;
 }}

\newcommand{\forestJ}{
\tikz[planar forest default, planar forest, ] {
  \node [  b,  label={[label distance=-1mm]0:{\scriptsize{}}}] at (0.0, 0.0) {}
  child {   node [    b,    label={[label distance=-1mm]0:{\scriptsize{}}}  ] at (0.0, 1.0) {}
  child {   node [    b,    label={[label distance=-1mm]0:{\scriptsize{}}}  ] at (-0.5, 1.0) {}
     edge from parent[    -,    solid, solid,    draw=black  ]   node [!l,right] {\scriptsize{}} }
child {   node [    b,    label={[label distance=-1mm]0:{\scriptsize{}}}  ] at (0.5, 1.0) {}
     edge from parent[    -,    solid, solid,    draw=black  ]   node [!l,right] {\scriptsize{}} }
   edge from parent[    -,    solid, solid,    draw=black  ]   node [!l,right] {\scriptsize{}} }
 ;
 }}

\newcommand{\forestK}{
\tikz[planar forest default, planar forest, ] {
  \node [  b,  label={[label distance=-1mm]0:{\scriptsize{}}}] at (0.0, 0.0) {}
  child {   node [    b,    label={[label distance=-1mm]0:{\scriptsize{}}}  ] at (-0.5, 1.0) {}
     edge from parent[    -,    solid, solid,    draw=black  ]   node [!l,right] {\scriptsize{}} }
child {   node [    b,    label={[label distance=-1mm]0:{\scriptsize{}}}  ] at (0.5, 1.0) {}
     edge from parent[    -,    solid, solid,    draw=black  ]   node [!l,right] {\scriptsize{}} }
 ;
 }}

\newcommand{\forestL}{
\tikz[planar forest default, planar forest, ] {
  \node [  b,  label={[label distance=-1mm]0:{\scriptsize{}}}] at (0.0, 0.0) {}
  child {   node [    b,    label={[label distance=-1mm]0:{\scriptsize{}}}  ] at (-1.0, 1.0) {}
     edge from parent[    -,    solid, solid,    draw=black  ]   node [!l,right] {\scriptsize{}} }
child {   node [    b,    label={[label distance=-1mm]0:{\scriptsize{}}}  ] at (0.0, 1.0) {}
  child {   node [    b,    label={[label distance=-1mm]0:{\scriptsize{}}}  ] at (0.0, 1.0) {}
     edge from parent[    -,    solid, solid,    draw=black  ]   node [!l,right] {\scriptsize{}} }
   edge from parent[    -,    solid, solid,    draw=black  ]   node [!l,right] {\scriptsize{}} }
child {   node [    b,    label={[label distance=-1mm]0:{\scriptsize{}}}  ] at (1.0, 1.0) {}
     edge from parent[    -,    solid, solid,    draw=black  ]   node [!l,right] {\scriptsize{}} }
 ;
 }}

\newcommand{\forestM}{
\tikz[planar forest default, planar forest, ] {
  \node [  b,  label={[label distance=-1mm]0:{\scriptsize{}}}] at (0.0, 0.0) {}
   ;
\node [  b,  label={[label distance=-1mm]0:{\scriptsize{}}}] at (1.0, 0.0) {}
  child {   node [    b,    label={[label distance=-1mm]0:{\scriptsize{}}}  ] at (0.0, 1.0) {}
     edge from parent[    -,    solid, solid,    draw=black  ]   node [!l,right] {\scriptsize{}} }
 ;
\node [  b,  label={[label distance=-1mm]0:{\scriptsize{}}}] at (2.0, 0.0) {}
   ;
 }}

\newcommand{\forestN}{
\tikz[planar forest default, planar forest, ] {
  \node [  b,  label={[label distance=-1mm]0:{\scriptsize{}}}] at (0.0, 0.0) {}
   ;
 }}

\newcommand{\forestO}{
\tikz[planar forest default, planar forest, ] {
  \node [  b,  label={[label distance=-1mm]0:{\scriptsize{}}}] at (0.0, 0.0) {}
  child {   node [    b,    label={[label distance=-1mm]0:{\scriptsize{}}}  ] at (-0.5, 1.0) {}
     edge from parent[    -,    solid, solid,    draw=black  ]   node [!l,right] {\scriptsize{}} }
child {   node [    b,    label={[label distance=-1mm]0:{\scriptsize{}}}  ] at (0.5, 1.0) {}
  child {   node [    b,    label={[label distance=-1mm]0:{\scriptsize{}}}  ] at (0.0, 1.0) {}
     edge from parent[    -,    solid, solid,    draw=black  ]   node [!l,right] {\scriptsize{}} }
   edge from parent[    -,    solid, solid,    draw=black  ]   node [!l,right] {\scriptsize{}} }
 ;
 }}

\newcommand{\forestP}{
\tikz[planar forest default, planar forest, ] {
  \node [  b,  label={[label distance=-1mm]0:{\scriptsize{}}}] at (0.0, 0.0) {}
  child {   node [    b,    label={[label distance=-1mm]0:{\scriptsize{}}}  ] at (-0.5, 1.0) {}
     edge from parent[    -,    solid, solid,    draw=black  ]   node [!l,right] {\scriptsize{}} }
child {   node [    b,    label={[label distance=-1mm]0:{\scriptsize{}}}  ] at (0.5, 1.0) {}
  child {   node [    b,    label={[label distance=-1mm]0:{\scriptsize{}}}  ] at (0.0, 1.0) {}
     edge from parent[    -,    solid, solid,    draw=black  ]   node [!l,right] {\scriptsize{}} }
   edge from parent[    -,    solid, solid,    draw=black  ]   node [!l,right] {\scriptsize{}} }
 ;
 }}

\newcommand{\forestQ}{
\tikz[planar forest default, planar forest, ] {
  \node [  b,  label={[label distance=-1mm]0:{\scriptsize{}}}] at (0.0, 0.0) {}
   ;
 }}

\newcommand{\forestR}{
\tikz[planar forest default, planar forest, ] {
  \node [  b,  label={[label distance=-1mm]0:{\scriptsize{}}}] at (0.0, 0.0) {}
  child {   node [    b,    label={[label distance=-1mm]0:{\scriptsize{}}}  ] at (-0.5, 1.0) {}
     edge from parent[    -,    solid, solid,    draw=black  ]   node [!l,right] {\scriptsize{}} }
child {   node [    b,    label={[label distance=-1mm]0:{\scriptsize{}}}  ] at (0.5, 1.0) {}
  child {   node [    b,    label={[label distance=-1mm]0:{\scriptsize{}}}  ] at (0.0, 1.0) {}
     edge from parent[    -,    solid, solid,    draw=black  ]   node [!l,right] {\scriptsize{}} }
   edge from parent[    -,    solid, solid,    draw=black  ]   node [!l,right] {\scriptsize{}} }
 ;
 }}

\newcommand{\forestS}{
\tikz[planar forest default, planar forest, ] {
  \node [  b,  label={[label distance=-1mm]0:{\scriptsize{}}}] at (0.0, 0.0) {}
  child {   node [    b,    label={[label distance=-1mm]0:{\scriptsize{}}}  ] at (-0.5, 1.0) {}
  child {   node [    b,    label={[label distance=-1mm]0:{\scriptsize{}}}  ] at (0.0, 1.0) {}
     edge from parent[    -,    solid, solid,    draw=black  ]   node [!l,right] {\scriptsize{}} }
   edge from parent[    -,    solid, solid,    draw=black  ]   node [!l,right] {\scriptsize{}} }
child {   node [    b,    label={[label distance=-1mm]0:{\scriptsize{}}}  ] at (0.5, 1.0) {}
     edge from parent[    -,    solid, solid,    draw=black  ]   node [!l,right] {\scriptsize{}} }
 ;
 }}

\newcommand{\forestT}{
\tikz[planar forest default, planar forest, ] {
  \node [  b,  label={[label distance=-1mm]0:{\scriptsize{}}}] at (0.0, 0.0) {}
   ;
 }}

\newcommand{\forestU}{
\tikz[planar forest default, planar forest, ] {
  \node [  b,  label={[label distance=-1mm]0:{\scriptsize{}}}] at (0.0, 0.0) {}
  child {   node [    b,    label={[label distance=-1mm]0:{\scriptsize{}}}  ] at (-0.5, 1.0) {}
     edge from parent[    -,    solid, solid,    draw=black  ]   node [!l,right] {\scriptsize{}} }
child {   node [    b,    label={[label distance=-1mm]0:{\scriptsize{}}}  ] at (0.5, 1.0) {}
  child {   node [    b,    label={[label distance=-1mm]0:{\scriptsize{}}}  ] at (0.0, 1.0) {}
     edge from parent[    -,    solid, solid,    draw=black  ]   node [!l,right] {\scriptsize{}} }
   edge from parent[    -,    solid, solid,    draw=black  ]   node [!l,right] {\scriptsize{}} }
 ;
 }}

\newcommand{\forestV}{
\tikz[planar forest default, planar forest, ] {
  \node [  b,  label={[label distance=-1mm]0:{\scriptsize{}}}] at (0.0, 0.0) {}
  child {   node [    b,    label={[label distance=-1mm]0:{\scriptsize{}}}  ] at (-1.0, 1.0) {}
     edge from parent[    -,    solid, solid,    draw=black  ]   node [!l,right] {\scriptsize{}} }
child {   node [    b,    label={[label distance=-1mm]0:{\scriptsize{}}}  ] at (0.0, 1.0) {}
  child {   node [    b,    label={[label distance=-1mm]0:{\scriptsize{}}}  ] at (-0.5, 1.0) {}
     edge from parent[    -,    solid, solid,    draw=black  ]   node [!l,right] {\scriptsize{}} }
child {   node [    b,    label={[label distance=-1mm]0:{\scriptsize{}}}  ] at (0.5, 1.0) {}
     edge from parent[    -,    solid, solid,    draw=black  ]   node [!l,right] {\scriptsize{}} }
   edge from parent[    -,    solid, solid,    draw=black  ]   node [!l,right] {\scriptsize{}} }
child {   node [    b,    label={[label distance=-1mm]0:{\scriptsize{}}}  ] at (1.5, 1.0) {}
  child {   node [    b,    label={[label distance=-1mm]0:{\scriptsize{}}}  ] at (0.0, 1.0) {}
  child {   node [    b,    label={[label distance=-1mm]0:{\scriptsize{}}}  ] at (0.0, 1.0) {}
     edge from parent[    -,    solid, solid,    draw=black  ]   node [!l,right] {\scriptsize{}} }
   edge from parent[    -,    solid, solid,    draw=black  ]   node [!l,right] {\scriptsize{}} }
   edge from parent[    -,    solid, solid,    draw=black  ]   node [!l,right] {\scriptsize{}} }
 ;
 }}

\newcommand{\forestW}{
\tikz[planar forest default, planar forest, ] {
  \node [  b,  label={[label distance=-1mm]0:{\scriptsize{}}}] at (0.0, 0.0) {}
   ;
 }}

\newcommand{\forestX}{
\tikz[planar forest default, planar forest, ] {
  \node [  b,  label={[label distance=-1mm]0:{\scriptsize{}}}] at (0.0, 0.0) {}
  child {   node [    b,    label={[label distance=-1mm]0:{\scriptsize{}}}  ] at (-1.0, 1.0) {}
     edge from parent[    -,    solid, solid,    draw=black  ]   node [!l,right] {\scriptsize{}} }
child {   node [    b,    label={[label distance=-1mm]0:{\scriptsize{}}}  ] at (0.0, 1.0) {}
     edge from parent[    -,    solid, solid,    draw=black  ]   node [!l,right] {\scriptsize{}} }
child {   node [    b,    label={[label distance=-1mm]0:{\scriptsize{}}}  ] at (1.0, 1.0) {}
  child {   node [    b,    label={[label distance=-1mm]0:{\scriptsize{}}}  ] at (0.0, 1.0) {}
     edge from parent[    -,    solid, solid,    draw=black  ]   node [!l,right] {\scriptsize{}} }
   edge from parent[    -,    solid, solid,    draw=black  ]   node [!l,right] {\scriptsize{}} }
 ;
 }}

\newcommand{\forestY}{
\tikz[planar forest default, planar forest, ] {
  \node [  b,  label={[label distance=-1mm]0:{\scriptsize{}}}] at (0.0, 0.0) {}
  child {   node [    b,    label={[label distance=-1mm]0:{\scriptsize{}}}  ] at (-1.0, 1.0) {}
  child {   node [    b,    label={[label distance=-1mm]0:{\scriptsize{}}}  ] at (0.0, 1.0) {}
     edge from parent[    -,    solid, solid,    draw=black  ]   node [!l,right] {\scriptsize{}} }
   edge from parent[    -,    solid, solid,    draw=black  ]   node [!l,right] {\scriptsize{}} }
child {   node [    b,    label={[label distance=-1mm]0:{\scriptsize{}}}  ] at (0.0, 1.0) {}
     edge from parent[    -,    solid, solid,    draw=black  ]   node [!l,right] {\scriptsize{}} }
child {   node [    b,    label={[label distance=-1mm]0:{\scriptsize{}}}  ] at (1.0, 1.0) {}
  child {   node [    b,    label={[label distance=-1mm]0:{\scriptsize{}}}  ] at (-0.5, 1.0) {}
     edge from parent[    -,    solid, solid,    draw=black  ]   node [!l,right] {\scriptsize{}} }
child {   node [    b,    label={[label distance=-1mm]0:{\scriptsize{}}}  ] at (0.5, 1.0) {}
     edge from parent[    -,    solid, solid,    draw=black  ]   node [!l,right] {\scriptsize{}} }
   edge from parent[    -,    solid, solid,    draw=black  ]   node [!l,right] {\scriptsize{}} }
 ;
 }}

\tikzstyle planar forest=[scale=2]

\usepackage{shuffle}

\newcommand{\graft}{\curvearrowright}
\newcommand{\ins}{\triangleright}

\newcommand{\ad}{\textrm{ad}}
\newcommand{\plus}{\diamond}
\newcommand{\minus}{\setminus}
\newcommand{\one}{\mathbf{1}}
\newcommand{\dF}{\mathbb{F}}
\newcommand{\blank}{{-}}
\newcommand{\id}{\mathrm{id}}

\newcommand{\R}{\mathbb{R}}
\newcommand{\N}{\mathbb{N}}

\newcommand{\MM}{\mathcal{M}}
\newcommand{\OO}{\mathcal{O}}
\newcommand{\TT}{\mathcal{T}}
\newcommand{\HH}{\mathcal{H}}
\newcommand{\UU}{\mathcal{U}}
\newcommand{\LL}{\mathcal{L}}

\newtheorem{theorem}{Theorem}[section]
\newtheorem{definition}[theorem]{Definition}
\newtheorem*{definition*}{Definition}

\newtheorem{proposition}[theorem]{Proposition}

\newtheorem*{remark*}{Remark}

\newtheorem*{remarks*}{Remarks}
\newtheorem{corollary}[theorem]{Corollary}

\newtheorem*{notation*}{Notation}

\newtheorem*{example*}{Example}

\newtheorem*{examples*}{Examples}

\numberwithin{equation}{section}

\begin{document}

\title{Convergence analysis of generalized modified splitting methods using multi-index series}
\titlemark{Convergence analysis of generalized modified splitting methods using multi-index series}

\emsauthor{1}{
	\givenname{Eugen}
	\surname{Bronasco}
	\orcid{0009-0006-3904-5523}
}{E.~Bronasco}
%%%% Repeat the same fields for each numbered author
\emsauthor{2}{
	\givenname{Mechthild}
	\surname{Thalhammer}
	\orcid{0000-0001-5958-6219}
}{M.~Thalhammer}

%%%% Please provide detailed address info for each author
%%%% Use the same numbering as for \emsauthor above
%%%% Please look up the ROR ID of your institute here: https://ror.org
\Emsaffil{1}{
	\department{Department of Mathematical Sciences}
	\organisation{Chalmers and Gothenburg University}
	\zip{412 58}
	\city{Göteborg}
	\country{Sweden}
	\affemail{bronasco@chalmers.se}}
%%%% Repeat the same fields for each numbered author
%%%% If some author has multiple affiliations, repeat the fields for each affiliation
%%%% Number the affiliations using {}
\Emsaffil{2}{
	\department{Department of Mathematics}
	\organisation{University of Innsbruck}
	\zip{6020}
	\city{Innsbruck}
	\country{Austria}
	\affemail{mechthild.thalhammer@uibk.ac.at}}

%------
% Add MSC 2020 codes according to https://zbmath.org/classification/.
% A unique primary MSC code (in curly brackets) is mandatory,
% while secondary MSC codes (in square brackets) are optional.
%------
\classification[65M12,47Axx]{65L20}

%------
% Add a list of keywords.
%------
\keywords{Time evaluation, Geometric Numerical Integration, Splitting methods, Modified potentials, Multi-index series, Order conditions, Butcher series}

%------
% Insert your abstract.
%------

\begin{abstract}
    We consider splitting methods for partial differential equations involving unbounded operators. For non-time-reversible dynamics, such as dissipative systems, negative splitting coefficients are generally not admissible because they require stepping backward in time, leading to an order barrier when all coefficients are required to be positive. We introduce generalized modified splitting methods to overcome this barrier. To analyze their convergence, we develop the corresponding multi-index series formalism, which provides a systematic framework for deriving order conditions. Using this formalism, we derive the order conditions and construct a generalized modified splitting method of order 6. We also provide Python scripts that automate the generation and verification of order conditions, as well as the construction of new generalized modified splitting methods. Finally, we establish connections between the introduced multi-index series and related series formalisms from the literature, including word series, Lie–Butcher series, and multi-index Butcher series.
\end{abstract}

\maketitle

\section{Introduction}

A broad class of ordinary and partial differential equations can be decomposed into a sum of terms whose corresponding evolution problems are significantly easier to integrate separately than the original equation. Splitting integrators are numerical methods specifically designed to exploit such decompositions by approximating the full evolution through a sequence of substeps associated with the individual terms. In this work, we consider the linear differential equation
\begin{equation}
    \label{eq:ode}
    \frac{d}{dt} u(t) = A u(t) + B u(t) \,, \quad 0 \leq t \leq T \,,
\end{equation}
where $A$ and $B$ are linear operators. Let $[A, B] = AB - BA$ denote the commutator and
\[ \ad^j_A (B) := [A, \ad^{j-1}_A (B)] \,, \quad \ad^0_A (B) = B \,. \]
We consider splitting integrators $\Phi_h$ with timestep $h$ of the form
\begin{equation}
    \label{eq:splitting}
    u_{n+1} = \Phi_h (u_n) = \prod_{j=1}^s e^{b_{s-j+1} h B} e^{a_{s-j+1} h A} u_n \,,
\end{equation}
fully characterized by the coefficients $a_j, b_j \in \R$, $j=1,\ldots,s$.

An integrator $\Phi_h$ is of \emph{convergence order} $p$ if
\[ | u(t_n) - u_n | = \OO(h^p) \,, \]
where $t_n = nh$ for $n \in \N$. \emph{Order p conditions} are algebraic constraints on $a_j, b_j \in \R$, $j=1,\dots,s$, which guarantee that the corresponding integrator is of order $p$. Order conditions for splitting methods can be obtained by using the Baker--Campbell--Hausdorff formula as
\[ e^{hA} e^{hB} = e^{hA + hB + \widetilde{BCH}(hA,hB)} \,, \]
where $\widetilde{BCH}(hA, hB)$ is a formal series of nested commutators of $A$ and $B$. The order conditions are then given by $\widetilde{BCH}(hA, hB) = 0$.

We adopt an alternative approach for deriving the order conditions based on the expansions of $u(h)$ and $u_1$ around $t=0$, followed by a comparison of the coefficients of the corresponding terms, see \cite{ThalhammerHEO08,BlanesNFS13}. This approach leads to a simpler formulation of the order conditions. Moreover, it enables us to introduce the formalism of multi-index series to analyse the algebraic structure of splitting integrators and derive order conditions systematically.

It was shown in \cite{BlanesNNC05,ShengSLP89,SuzukiGTF91} that achieving order $4$ of convergence requires some of the coefficients $a_j$ and $b_j$ to be negative. This is a problem for integration of non-time-reversible systems. This problem is solved by either considering complex valued $a_j, b_j$ \cite{CastellaSMC09,HansenHOS09}, or by considering splitting methods with modified potentials \cite{ruth1983canonical,SuzukiGTF91,SuzukiHEP95} of the following form,
\begin{equation}
    \label{eq:splitting_mod}
    u_{n+1} = \Phi_h (u_n) = \prod_{j=1}^s e^{b_{s-j+1} h B + c_{s-j+1} h^3 [B,[A,B]]} e^{a_{s-j+1} h A} u_n \,,
\end{equation}
fully characterized by the coefficients $a_j, b_j, c_j \in \R$, $j=1,\ldots,s$. The addition of the nested commutator $[B, [A, B]]$ introduces additional degrees of freedom which allows to achieve order $4$ convergence with positive coefficients. Convergence order analysis for splitting methods with modified potentials was performed in \cite{KieriSCF15}.

However, \cite{AuzingerNGS19} demonstrates that splitting methods with modified potentials of order $6$ must have some of the coefficients $a_j, b_j, c_j$ negative. Moreover, it is proven in \cite{BeauchardCTS26} that to achieve order $2n$ for $n \in \N$, a nested commutator of order $2n-1$ must be added. Therefore, we consider a natural generalization which we call \emph{generalized modified splitting methods} of the form
\begin{equation}
    \label{eq:splitting_gen}
    u_{n+1} = \Phi_h (u_n) = \prod_{j=1}^s e^{b_{s-j+1} h B_{s-j+1}} e^{a_{s-j+1} h A} u_n \,,
\end{equation}
where the operator $B_j$ for $j=1,\dots,s$ is given by
\begin{align*}
    B_j = B &+ c^{(j)}_1 h [A,B] + c^{(j)}_2 h^2 [A,[A,B]] \\ 
            &+ c^{(j)}_3 h^2 [B,[A,B]] + c^{(j)}_4 h^3 [A,[A,[A,B]]] + \cdots \,,
\end{align*}
where we choose to write the elements of the Lie algebra in the left-normed Hall basis.
We analyse such splitting methods using multi-index series introduced in Section \ref{sec:multi_index_butcher}.

We introduce multi-index series as a framework for representing and systematically analyzing the expansions of the exact solution and the splitting integrators considered in \cite{ThalhammerHEO08,BlanesNFS13}. The distinguishing feature of multi-index series, compared with the classical Butcher series and word series introduced in \cite{MuruaHAR06,MuruaWSD17,Sanz-SernaFSN15}, is that they do not rely on a Taylor expansion of the operator $e^{hA}$. Consequently, the operator $A$ may be unbounded, as is the case for differential operators such as the Laplacian. This leads to expansions that differ substantially from classical Butcher and word series and are therefore better suited to the analysis of splitting methods for partial differential equations.

The paper is organized as follows. In Section \ref{sec:multi_index_butcher}, we introduce multi-index series and use them to derive the expansions of the exact solution and generalized modified splitting methods. We then show how these expansions lead to the derivation of order conditions and discuss their reduction using Lyndon multi-indices. In Section \ref{sec:combinatorial_Hopf_algebras}, we introduce the necessary tools from combinatorial Hopf algebra theory, which provide the foundation for the algebraic study of the order conditions. The Hopf algebraic machinery allows us to express compositions of splitting methods as multi-index series in Section \ref{sec:composition}, derive order conditions for generalized modified splitting methods, and construct high-order generalized modified splitting methods in Section \ref{sec:substitution}. Finally, in Section \ref{sec:Butcher}, we establish connections between the introduced multi-index series and related series formalisms from the literature, including word series, Lie--Butcher series, and multi-index Butcher series.

\section{Multi-index series}
\label{sec:multi_index_butcher}

A \emph{multi-index} is a tuple of natural numbers of arbitrary length. A multi-index $\mu$ is written as $\mu = (\mu_1, \dots, \mu_k)$ for some $k \in \N$ and $\mu_j \in \N$ for $j=1,\dots,k$. A multi-index can also be empty, $\mu = \one$.
The set of multi-indices is denoted by $\N^\infty$. Let us use the following notation,
\[ \mu! := \prod_{j=1}^k \mu_j ! \,, \quad |\mu| := k + \sum_{j=1}^k \mu_j \,, \]
for $\mu \in \N^k$ with $\one! = 1$ and $|\one| = 0$. 

Let $\dF$ be the correspondence map between multi-indices and differential operators defined as
\[ \dF(\mu) = h^{|\mu|} \ad_{-A}^{\mu_k} (B) \cdots \ad_{-A}^{\mu_1} (B) \,, \]
where $\mu \in \N^k$ with $\dF(\one) = \id$.

\begin{definition}
    \label{def:butcher}
    \emph{Multi-index series} are defined as
    \[ B(\alpha) := \sum_{\mu \in \N^\infty} \frac{\alpha(\mu)}{\mu!} \dF(\mu) \,, \]
    where $\alpha : \N^\infty \to \R$ is a coefficient map.
\end{definition}

\subsection{Exact solution}
\label{sec:butcher_exact}

The solution of a linear time-dependent differential equation,
\begin{equation}
    \label{eq:linear_ode}
    \frac{d}{dt} Y(t) = A(t) Y(t) \,,
\end{equation}
is given by the time-ordered exponential \cite{MagnusESD54,BlanesMEI09,Ebrahimi-FardWME25},
\[ Y(t) = \TT \exp\big( \int_0^t A(s) ds \big) = \sum_{l \in \N} \int_0^t \int_0^{t_l} \cdots \int_0^{t_2} A(t_l) \cdots A(t_1) dt_1 \cdots dt_{l-1} dt_l \,. \]

This fact was used in \cite{BlanesNFS13} to derive an expansion of the exact solution first obtained in \cite{ThalhammerCAH12,ThalhammerHEO08}. We use this fact here to write the expansion as a multi-index series.

Let $\mu_{1:l}$ for $\mu \in \N^k$ and $l \leq k$ denote the multi-index $(\mu_1, \dots, \mu_l)$.

\begin{proposition}
    \label{prop:butcher_exact}
    The exact solution of \eqref{eq:ode} can be written as multi-index series as follows,
    \[ u(h) = e^{hA} B(1/\gamma) u_n \,, \quad \text{with } u(0) = u_n \,, \]
    where $\gamma(\mu) := \prod_{l=1}^k |\mu_{1:l}|$.
\end{proposition}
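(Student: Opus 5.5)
We will work in the interaction picture. Write $u(t) = e^{tA} v(t)$. Since $u' = (A+B)u$, we obtain
\[ v'(t) = e^{-tA} B e^{tA} v(t), \qquad v(0) = u_n. \]
This is a linear time-dependent equation of the form \eqref{eq:linear_ode}, with coefficient $\widetilde B(t) := e^{-tA} B e^{tA}$. The point of this reduction is that $A$ only ever appears inside the conjugation, so no Taylor expansion of $e^{hA}$ is needed.

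The first step is to expand the coefficient. We use the standard identity
\[ e^{-tA} B e^{tA} = e^{\ad_{-tA}}(B) = \sum_{m \in \N} \frac{t^m}{m!} \ad_{-A}^m(B), \]
understood formally, exactly as the paper treats all the series. Then we insert this into the time-ordered exponential:
\[ v(h) = \sum_{k} \int_{0<t_1<\dots<t_k<h} \widetilde B(t_k)\cdots \widetilde B(t_1)\, dt_1\cdots dt_k\, u_n. \]
Expanding each factor and collecting terms by the tuple $\mu = (\mu_1,\dots,\mu_k)$ of exponents gives
\[ v(h) = \sum_{k}\sum_{\mu\in\N^k} \frac{1}{\mu!}\, \ad_{-A}^{\mu_k}(B)\cdots \ad_{-A}^{\mu_1}(B) \int_{0<t_1<\dots<t_k<h} t_1^{\mu_1}\cdots t_k^{\mu_k}\, dt_1\cdots dt_k\; u_n. \]
The operator ordering here matches $\dF(\mu)$: the earliest time $t_1$ carries $\mu_1$ and sits rightmost.

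The second step, and the only real computation, is the iterated integral. We integrate inductively in $t_1$, then $t_2$, and so on. After integrating out $t_1,\dots,t_{l}$, the result is
\[ \frac{t_{l+1}^{\,|\mu_{1:l}|}}{\prod_{j=1}^{l} |\mu_{1:j}|}. \]
The induction step works because $|\mu_{1:l+1}| = |\mu_{1:l}| + \mu_{l+1} + 1$, so that
\[ \int_0^{t_{l+2}} t_{l+1}^{\mu_{l+1}}\, t_{l+1}^{|\mu_{1:l}|}\, dt_{l+1} = \frac{t_{l+2}^{|\mu_{1:l+1}|}}{|\mu_{1:l+1}|}. \]
At the end the integral equals $h^{|\mu|}/\gamma(\mu)$. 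Combining this with $h^{|\mu|}$ in the definition of $\dF$, we get
\[ v(h) = \sum_\mu \frac{1}{\gamma(\mu)\,\mu!}\,\dF(\mu)\, u_n = B(1/\gamma)\, u_n, \]
where the empty multi-index contributes $\id$ with $\gamma(\one)=1$. Hence $u(h) = e^{hA} B(1/\gamma) u_n$.

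The main obstacle is justification rather than algebra. The manipulations (the adjoint expansion, interchanging sums and integrals, Dyson series convergence) are formal when $A$ is unbounded. We would state the result, as the paper does, as an identity of formal series in $h$. Alternatively, we could truncate after finitely many terms, use the integral remainder of the variation-of-constants formula, and assume enough regularity of $u_n$ for the nested commutators to make sense. The remaining care is the ordering convention, i.e. that the rightmost factor corresponds to $\mu_1$, which we would check against the definition of $\dF$.
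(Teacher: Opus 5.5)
Your proposal is correct and follows essentially the same route as the paper: your $v(t)$ is exactly the paper's $Y(t)=e^{-tA}e^{t(A+B)}u_n$. The paper likewise writes it as a time-ordered exponential of $\exp(t\,\ad_{-A})(B)$, expands termwise, and reduces everything to the simplex integral $h^{|\mu|}/\gamma(\mu)$. Your inductive evaluation of that iterated integral via $|\mu_{1:l+1}| = |\mu_{1:l}|+\mu_{l+1}+1$, and your check of the operator ordering against $\dF(\mu)$, make explicit steps that the paper states without detail.
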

\begin{proof}
    The exact solution of \eqref{eq:ode} is given by $u(h) = e^{hA + hB} u_n$. We prove that $B(1/\gamma) = Y(h)$ where $Y(h) = e^{-hA} e^{hA + hB}$. We can check that $Y(t)$ satisfies the liner time-dependent differential equation
    \[ \frac{d}{dt} Y(t) = \big(\exp(t\ad_{-A})(B)\big) Y(t) \,, \]
    therefore, using time-ordered exponential, we can write
    \begin{align*}
        Y(h) &= \sum_{l \in \N} \int_0^h \int_0^{t_l} \cdots \int_0^{t_2} \exp(t_l\ad_{-A})(B) \cdots \exp(t_1\ad_{-A})(B) dt_1 \cdots dt_{l-1} dt_l \\
             &= \sum_{\mu \in \N^\infty} \frac1{\mu!} \Big( \int_0^h \int_0^{t_l} \cdots \int_0^{t_2} t_l^{\mu_l} \cdots t_1^{\mu_1} dt_1 \cdots dt_{l-1} dt_l \Big) \ad^{\mu_l}_{-A} (B) \cdots \ad^{\mu_1}_{-A} (B) \\
             &= \sum_{\mu \in \N^\infty} \frac1{\mu!} \frac{h^{|\mu|}}{\gamma(\mu)} \ad^{\mu_l}_{-A} (B) \cdots \ad^{\mu_1}_{-A} (B) = B(1/\gamma) \,,
    \end{align*}
    where $\gamma(\mu) := \prod_{l=1}^k \big( l + \sum_{j=1}^l \mu_j \big) = \prod_{l=1}^k |\mu_{1:l}|$ for $\mu = (\mu_1, \dots, \mu_k)$.
\end{proof}

\subsection{Generalized modified splitting methods}
\label{sec:butcher_splitting}

Convergence order analysis of generalized modified splitting methods \eqref{eq:splitting_gen} leads to the introduction of \emph{decorated multi-indices}. Let the set $\Lambda_k$ be defined as
\[ \Lambda_k := \{ \lambda \in \N^k \; | \; 1 \leq \lambda_1 \leq \cdots \leq \lambda_k \leq s \} \,. \]

\begin{definition}
    A \emph{decorated multi-index} is a pair $(\mu, \lambda)$ of a multi-index $\mu \in \N^k$ for some $k \in \N$ and a decoration $\lambda \in \Lambda_k$. The set of decorated multi-indices is denoted by $\N^\infty_\Lambda$.
\end{definition}

The correspondence map $\dF$ between decorated multi-indices and differential operators is defined as
\[ \dF(\mu, \lambda) = h^{|\mu|} \ad_{-A}^{\mu_k} (B_{\lambda_k}) \cdots \ad_{-A}^{\mu_1} (B_{\lambda_1}) \,, \]
where $\mu \in \N^k$ and $\lambda \in \Lambda_k$ and $\dF(\one, \one) = \id$.

Let the symmetric sum over the set $\Lambda_k$ be defined as
\[ \sum^!_{\lambda \in \Lambda_k} := \sum_{\lambda \in \Lambda_k} \frac1{q_1! \cdots q_s!} \,, \]
where $q_i$ is the number of occurances of $i$ in $\lambda$.

\begin{definition}
    \label{def:decorated_butcher}
    \emph{Decorated multi-index series} are defined as
    \[ B_\Lambda(\alpha) := \sum^!_{(\mu, \lambda) \in \N^\infty_\Lambda} \frac{\alpha(\mu, \lambda)}{\mu!} \dF(\mu, \lambda) \,, \]
    where $\alpha : \N^\infty_\Lambda \to \R$ is a coefficient map.
\end{definition}

The proof of Proposition \ref{prop:splitting_expansion} relies on the following identity,
\begin{equation}
    \label{eq:commute_exp}
    e^A e^B e^{-A} = e^{\exp(\ad_A)(B)} \,,
\end{equation}
where $\exp(\ad_A) = \id + \sum_{k=1}^\infty \frac{1}{k!} \ad^k_A $ is the operator exponential.

\begin{proposition}
    \label{prop:splitting_expansion}
    Consider a splitting method \eqref{eq:splitting_gen} with coefficients $a_j, b_j \in \R$, $j=1,\dots,s$. Then, it can be expanded using decorated multi-index series as follows,
    \[ u_{n+1} = e^{hA} B_\Lambda(\alpha) u_n \,, \]
    for $\alpha : \N^\infty_\Lambda \to \R$ defined for $\mu \in \N^k, \lambda \in \Lambda_k$ and $c_{\lambda_i} = \sum_{j=1}^{\lambda_i} a_j$ as,
    \[ \alpha (\mu, \lambda) = \prod_{i=1}^k b_{\lambda_i} c_{\lambda_i}^{\mu_i} \,. \]
\end{proposition}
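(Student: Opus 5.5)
The plan is to rewrite the splitting product \eqref{eq:splitting_gen} as $e^{hA}$ times a product of conjugated exponentials, and then expand each conjugated factor with \eqref{eq:commute_exp}. Throughout, I assume the consistency condition $c_s=\sum_{j=1}^s a_j = 1$. This is needed for the prefactor to be exactly $e^{hA}$; otherwise the same argument gives $e^{c_s hA}$. I also set $c_0:=0$, so that $a_j = c_j - c_{j-1}$.

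\textbf{Step 1 (telescoping).} I insert $e^{-c_j hA}e^{c_j hA}$ between consecutive factors. Writing the product with the index $j=s$ on the left, this gives
\[
\Phi_h = e^{b_s hB_s}e^{a_s hA}\cdots e^{b_1hB_1}e^{a_1hA}
= e^{c_s hA}\prod_{j=s}^{1}\Big(e^{-c_j hA}\,e^{b_j hB_j}\,e^{c_j hA}\Big).
\]
The product on the right is ordered with $j=s$ leftmost. The identity holds because, reading from the left, $e^{c_j hA}e^{-c_{j-1}hA}=e^{a_j hA}$ for each $j$, and the rightmost factor $e^{c_1hA}$ equals $e^{a_1hA}$ since $c_0=0$.

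\textbf{Step 2 (conjugation).} By \eqref{eq:commute_exp}, applied with $-c_jhA$ in place of $A$, each bracket equals
\[
\exp\Big(b_j h \exp(c_j h\,\ad_{-A})(B_j)\Big)
= \exp\Big(b_j h\sum_{m\ge 0}\frac{(c_jh)^m}{m!}\ad^m_{-A}(B_j)\Big).
\]
The operators $B_j$ carry their own powers of $h$. These are simply kept inside $B_j$, because the correspondence map $\dF(\mu,\lambda)$ is built from the $B_{\lambda_i}$ themselves.

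\textbf{Step 3 (expansion and reindexing).} I expand each exponential as $\sum_{q_j\ge0}\frac{1}{q_j!}(\cdots)^{q_j}$. Multiplying out the $q_j$-th power gives an ordered sum over tuples $(m_1,\dots,m_{q_j})$, with coefficient $\prod_i b_j c_j^{m_i}h^{1+m_i}/m_i!$. I then multiply the $s$ factors in the order $j=s,\dots,1$.

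A term of the result is determined by the counts $q_1,\dots,q_s$ together with one exponent per letter. I encode this as a decorated multi-index. The decoration is $\lambda\in\Lambda_k$, the non-decreasing sequence containing $q_i$ copies of $i$, where $k=\sum_i q_i$. The multi-index $\mu\in\N^k$ collects the exponents, indexed so that $\mu_k$ belongs to the leftmost letter. Because larger $j$ stands to the left, the operator obtained is exactly $h^{|\mu|}\ad^{\mu_k}_{-A}(B_{\lambda_k})\cdots\ad^{\mu_1}_{-A}(B_{\lambda_1})=\dF(\mu,\lambda)$. Here I use $|\mu|=k+\sum_i\mu_i$.

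The coefficient of this term is $\frac{1}{q_1!\cdots q_s!}\cdot\frac{1}{\mu!}\prod_{i=1}^k b_{\lambda_i}c_{\lambda_i}^{\mu_i}$. This is precisely the symmetric-sum weight times $\alpha(\mu,\lambda)/\mu!$. Summing over all terms therefore gives $B_\Lambda(\alpha)$.

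\textbf{Main difficulty.} The main point to check is that this correspondence is a bijection. Every pair $(q,\text{tuples})$ must map to exactly one $(\mu,\lambda)$, and conversely. This holds because within a single block $j$ the ordered tuples are kept distinct, and across blocks the order is fixed by $\lambda$ being non-decreasing. It is also the reason the factors $1/q_i!$ appear and why the $\mu_i$ within a block are not symmetrized.

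All identities are understood as formal series in $h$. In particular, no Taylor expansion of $e^{hA}$ is ever taken: $A$ appears only through the prefactor $e^{hA}$ and through commutators $\ad_{-A}$. This is consistent with how Proposition~\ref{prop:butcher_exact} was treated.
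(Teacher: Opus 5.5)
Your proposal is correct and follows essentially the same route as the paper: under the same assumption $c_s=1$, you move all factors $e^{a_jhA}$ to the left using \eqref{eq:commute_exp}, which yields the conjugated exponentials $\exp\big(b_jh\exp(c_jh\,\ad_{-A})(B_j)\big)$. You then expand their product and regroup the terms by the non-decreasing decoration $\lambda$, with the symmetric weights $1/(q_1!\cdots q_s!)$. The only difference is that the paper cites Lemma 4.1 of \cite{OwrenRMA99} for expanding the product of exponentials, whereas you carry out the telescoping and the bijection between block sizes with exponent tuples and decorated multi-indices $(\mu,\lambda)$ explicitly; this also keeps the sign of $\ad_{-A}$ consistent throughout.
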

\begin{proof}
    Given the splitting method of the form \eqref{eq:splitting_gen}, we apply \eqref{eq:commute_exp} repeatedly to move all exponentials $e^{a_j hA}$ to the left and obtain,
    \[ u_{n+1} = e^{h A} \prod_{j=1}^s e^{b_{s-j+1} h \exp(\ad_{c_{s-j+1} h A}) (B_{s-j+1})} u_n \,, \]
    where we use the assumption that $c_s = 1$. We follow the argument of Lemma 4.1 in \cite{OwrenRMA99} and obtain,
    \begin{align*}
        \prod_{j=1}^s &e^{b_{s-j+1} h \exp(\ad_{-c_{s-j+1} h A}) (B_{s-j+1})} \\
        &= \sum_{r \in \N^s} h^{\sum_{j=1}^s r_j} \prod_{j=1}^s \frac1{r_{s-j+1}!} b_{s-j+1}^{r_{s-j+1}} \exp(\ad_{-c_{s-j+1} h A}) (B_{s-j+1})^{r_{s-j+1}} \\
                                                                             &= \sum_{k=0}^\infty \sum_{\lambda \in \Lambda_k}^! h^k \prod_{l=1}^k b_{\lambda_{s-l+1}} \exp(\ad_{-c_{\lambda_{s-l+1}} h A}) (B_{\lambda_{s-l+1}})
        \intertext{use the definition $\exp(\ad_x) = \sum_{i=0}^\infty \frac{1}{i!} \ad_x^i$,}
                                                                             &= \sum_{k=0}^\infty \sum_{\mu \in \N^k} h^{|\mu|} \frac1{\mu!} \sum_{\lambda \in \Lambda_k}^! \prod_{l=1}^k b_{\lambda_{s-l+1}} c_{\lambda_{s-l+1}}^{\mu_{s-l+1}} \ad_{-A}^{\mu_{s-l+1}} (B_{\lambda_{s-l+1}})
        \intertext{use the definition of decorated multi-index series,}
                                                                             &= \sum^!_{(\mu, \lambda) \in \N^\infty_\Lambda} \frac{\alpha(\pi, \lambda)}{\mu!} \dF(\pi,\lambda) = B_\Lambda (\alpha) \,.
    \end{align*}
    We thus obtain $u_{n+1} = e^{hA} B_\Lambda(\alpha) u_n$ with $\alpha(\mu, \lambda) = \prod_{i=1}^k b_{\lambda_i} c_{\lambda_i}^{\mu_i}$ for $\mu \in \N^k$ and $\lambda \in \Lambda_k$.
\end{proof}

Note that for the case $B_1 = \dots = B_s = B$, we recover the expansions of the splitting methods \eqref{eq:splitting} obtained in \cite{BlanesNFS13,ThalhammerCAH12,ThalhammerHEO08}.

\begin{corollary}
    Assume $B_1 = \dots = B_s = B$, then,
    \[ u_{n+1} = e^{hA} B(\alpha) u_n \,, \]
    with the coefficient $\alpha(\mu) = \sum_{\lambda \in \Lambda_k}^! \prod_{i=1}^k b_{\lambda_i} c_{\lambda_i}^{\mu_i}$ for $\mu \in \N^k$.
\end{corollary}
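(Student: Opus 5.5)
This follows directly from Proposition~\ref{prop:splitting_expansion}: we specialize it to $B_1 = \dots = B_s = B$ and then collapse the decorations. The key observation is that when all $B_j$ coincide, the correspondence map on decorated multi-indices forgets the decoration:
\[ \dF(\mu,\lambda) = h^{|\mu|}\ad_{-A}^{\mu_k}(B)\cdots\ad_{-A}^{\mu_1}(B) = \dF(\mu) \]
for every $\lambda \in \Lambda_k$. This holds because each $B_{\lambda_i}$ equals $B$, and the $h$-dependent commutator corrections in $B_j$ all vanish; equivalently, every $c^{(j)}_i$ is zero.

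With this in hand, we start from $u_{n+1} = e^{hA} B_\Lambda(\alpha_\Lambda) u_n$, where $\alpha_\Lambda(\mu,\lambda) = \prod_{i=1}^k b_{\lambda_i} c_{\lambda_i}^{\mu_i}$. We then write out Definition~\ref{def:decorated_butcher}. The sum over $\N^\infty_\Lambda$ is the double sum over $\mu \in \N^k$ for each $k$ and over $\lambda \in \Lambda_k$, weighted by $1/(q_1!\cdots q_s!)$. Since neither $\mu!$ nor $\dF(\mu,\lambda)=\dF(\mu)$ depends on $\lambda$, we can interchange the order and pull the symmetric $\lambda$-sum inside:
\[ B_\Lambda(\alpha_\Lambda) = \sum_{\mu\in\N^\infty} \frac{1}{\mu!}\Big(\sum^!_{\lambda\in\Lambda_k}\prod_{i=1}^k b_{\lambda_i}c_{\lambda_i}^{\mu_i}\Big)\dF(\mu) = B(\alpha), \]
with $\alpha$ exactly as stated. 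The case $k=0$ is consistent: the empty decoration gives $\alpha(\one)=1$ and $\dF(\one)=\id$.

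The only point needing care is the justification of the rearrangement. The series are formal power series in $h$. For each fixed power $h^n$, only finitely many $\mu$ satisfy $|\mu| = n$, and each $\Lambda_k$ is finite. Hence the interchange of summations is a finite regrouping at every order, and no convergence issue arises. I do not expect any genuine obstacle beyond checking that the indexing in Proposition~\ref{prop:splitting_expansion} pairs $\mu_i$ with $\lambda_i$ in the same position. The corollary's formula uses exactly this pairing.
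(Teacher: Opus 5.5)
Your proposal is correct and follows the paper's intended argument. The paper gives no separate proof: the corollary is the specialization of Proposition~\ref{prop:splitting_expansion} to $B_j = B$, where $\dF(\mu,\lambda) = \dF(\mu)$ no longer depends on $\lambda$, so the symmetric sum over $\Lambda_k$ is absorbed into the coefficient. Your check that the regrouping is finite at each power of $h$ adds rigor the paper leaves implicit.
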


\subsection{Order conditions}
\label{sec:order_conditions}

Since both the exact solution and the splitting method \eqref{eq:splitting} can be written using multi-index series, the order conditions for an arbitrary order $p$ can be obtained by comparing the corresponding coefficients up to multi-indices $\pi$ of size $|\mu| \leq p$. That is, a splitting method \eqref{eq:splitting} is of order $p$ if and only if
\[ \alpha (\mu) = \frac1{\gamma(\mu)} \,, \quad \text{for all } \mu \in \N^\infty \,, \ |\mu| \leq p \,. \]
This recovers the order conditions obtained in \cite{BlanesNFS13,ThalhammerCAH12,ThalhammerHEO08}.

In \cite{KieriSCF15}, order conditions are derived for splitting methods with modified potentials \eqref{eq:splitting_mod} where the operator $B_j$ in \eqref{eq:splitting_gen} equals $B + h^2 c_j [B, [A,B]]$. We generalize this result in Section \ref{sec:substitution} where we consider a general $B_j$ given by
\[ B_j = B + c^{(i)}_1 [A,B] + c^{(i)}_2 [A,[A,B]] + c^{(i)}_3 [B,[A,B]] + c^{(i)}_4 [A,[A,[A,B]]] + \cdots \,. \]

\subsubsection{Lyndon multi-indices}
\label{sec:lyndon}

Let $\MM$ be the vector space spanned by the multi-indices $\N^\infty$.
Let $\cdot : \MM \otimes \MM \to \MM$ denote the concatenation product given by,
\[ (\mu_1, \dots, \mu_k) \cdot (\tilde\mu_1, \dots, \tilde\mu_{\tilde{k}}) = (\mu_1, \dots, \mu_k, \tilde\mu_1, \dots, \tilde\mu_{\tilde{k}}) \,. \]
Let $\shuffle : \MM \otimes \MM \to \MM$ denote the shuffle product which interleaves the indices in the left and right operand while preserving their relative order. The empty multi-index $\one$ is the neutral element. It is defined recursively for $\one \neq \mu \in \N^k $ and $\one \neq \tilde\mu \in \N^{\tilde{k}}$ as
\[ \mu \shuffle \tilde\mu = \mu_{1:1} \big( \mu_{2:k} \shuffle \tilde\mu \big) + \tilde{\mu}_{1:1} \big( \mu \shuffle \tilde{\mu}_{2:\tilde{k}} \big) \,. \]
For example,
\begin{align*}
    (1,2,3) \shuffle (4,5) = (1,2,3,4,5) &+ (1,2,4,3,5) + (1,2,4,5,3) + (1,4,2,3,5) \\
                                         &+ (1,4,2,5,3) + (1,4,5,2,3) + (4,1,2,3,5) \\
                                         &+ (4,1,2,5,3) + (4,1,5,2,3) + (4,5,1,2,3) \,.
\end{align*}
The number of terms in $\mu \shuffle \tilde\mu$ is given by the binomial coefficient $\frac{(k + l)!}{k! l!}$ where $\mu \in \N^k$ and $\tilde\mu \in \N^l$.

A key property of $\alpha$ and $1/\gamma$ which allows a considerable reduction in the number of order conditions is given by
\[ \alpha(\mu \shuffle \tilde\mu) = \alpha(\mu) \alpha(\tilde\mu) \quad \text{and} \quad \frac1{\gamma}(\mu \shuffle \tilde\mu) = \frac1{\gamma}(\mu) \frac1{\gamma}(\tilde\mu) \,. \]
That is, both $\alpha$ and $1/\gamma$ are \emph{characters} with respect to the shuffle product. This property was used for order condition reduction in \cite{BlanesNFS13}. The vector space $\MM$ together with the shuffle product $\shuffle$ forms the shuffle algebra which is generated by Lyndon multi-indices.

Assume the usual dictionary order on the set $\N^\infty$ of multi-indices. A cyclic permutation $\sigma$ is a permutation that moves the number at $i$-th position of $\mu \in \N^k$ to the position $(i+l) \mod k$ for some $l \in \N$, that is, $\sigma(\mu)_i = \mu_{(i-l) \mod k}$, where the position numbering starts at $0$. For example, the cyclic permutations of $(1,2,3)$ are $(3,1,2)$ for $l=1$ and $(2,3,1)$ for $l=2$.

\begin{definition}
    \label{def:lyndon}
    A multi-index $\mu$ is a \emph{Lyndon multi-index} if it is strictly smaller than any of its non-trivial cyclic permutations.
\end{definition}

The set of Lyndon multi-indices is denoted by $N^\infty_L$. For example, the first few Lyndon multi-indices are given by
\begin{align*}
    \N^\infty_L = \{ (0) \,, \quad &(1) \,, \quad (0,1) \,, \quad (2) \,, \quad (0,0,1) \,, \quad (0,2) \,, \quad (3) \,, \\
                          &(0,0,0,1) \,, \quad (0,0,2) \,, \quad (0,1,1) \,, \quad (0,3) \,, \quad (1,2) \,, \quad (4) \,, \quad \dots \} \,.
\end{align*}

We prove the character property of $1/\gamma$ and $\alpha$ in Section \ref{sec:combinatorial_Hopf_algebras} using the theory of combinatorial Hopf algebras.

\section{Combinatorial Hopf algebras}
\label{sec:combinatorial_Hopf_algebras}

We introduce the necessary tools from combinatorial Hopf algebra theory which will allow us to prove the character properties of $1/\gamma$ and $\alpha$ considered in Section \ref{sec:lyndon}. Moreover, the algebraic machinary of Hopf algebras will allow us to express the composition of splitting methods as multi-index series in Section \ref{sec:composition} and derive the order conditions for generalized modified splitting methods in Section \ref{sec:substitution}.

\begin{definition}
    \label{sec:Hopf_algebra}
    A Hopf algebra is a vector space $V$ endowed with an algebra $(V, \cdot, \one)$ and coalgebra structure $(V, \Delta, \epsilon)$ with coproduct $\Delta : V \to V \otimes V$ and counit $\epsilon : V \to \R$ such that the algebra and coalgebra structures are compatible, that is,
    \begin{enumerate}
        \item $\epsilon(\one) = 1$ ,
        \item $\epsilon(a \cdot b) = \epsilon(a) \epsilon(b)$ ,
        \item $\Delta(\one) = \one \otimes \one$ ,
        \item $\Delta (a \cdot b) = \Delta (a) \cdot^{\otimes} \Delta(b)$ ,
    \end{enumerate}
    where $(a \otimes b) \cdot^{\otimes} (c \otimes d) = (a \cdot c) \otimes (b \cdot d)$.
    Moreover, there exist a map called \emph{antipode} $S : V \to V$ such that
    \[ \sum_{(a)} S(a_{(1)}) \cdot a_{(2)} = \one \epsilon(a) = \sum_{(a)} a_{(1)} \cdot S(a_{(2)}) \,, \]
    where $\Delta(a) = \sum_{(a)} a_{(1)} \otimes a_{(2)}$.
\end{definition}

We consider two well-known Hopf algebras over $\MM$ which are the concatenation-deshuffle Hopf algebra $\HH_\cdot$ and the shuffle-deconcatenation Hopf algebra $\HH_\shuffle$. As can be guessed from their names, the two Hopf algebras are adjoint to each other through the inner product,
\[ \langle \mu, \tilde\mu \rangle_! = \begin{cases} \mu! \,, \quad \text{if } \mu = \tilde\mu \,, \\ 0 \,, \quad \text{otherwise} \,. \end{cases} \]

Algebraic structure of $\HH_\cdot$ is given by the concatenation product $\cdot$ while the coalgebra structure is given by the counit $\delta_\one$ the deshuffle coproduct $\Delta_\shuffle$ defined as
\[ \Delta_\shuffle (\mu) = \mu \otimes \one + \one \otimes \mu \,, \quad \text{for } \mu \in \N^1 \,, \]
and extended to $\N^\infty$ using the compatibility with the concatenation product.

Algebraic structure of $\HH_\shuffle$ is given by the shuffle product $\shuffle$ while the coalgebra structure is given by the counit $\delta_\one$ and the deconcatenation coproduct $\Delta_\cdot$ defined as
\[ \Delta_\cdot (\mu) = \mu \otimes \one + \sum_{i=1}^{k-1} \mu_{1:i} \otimes \mu_{i+1:k} + \one \otimes \mu \,, \quad \text{for } \mu \in \N^k \,. \]

The shuffle product is related to the deshuffle coproduct by
\[ \langle \mu \shuffle \tilde\mu, \eta \rangle_! = \langle \mu \otimes \tilde\mu, \Delta_\shuffle (\eta) \rangle_! = \sum_{(\eta)} \langle \mu, \eta_{(1)} \rangle_! \langle \tilde\mu, \eta_{(2)} \rangle_! \,, \quad \text{for } \mu, \tilde\mu, \eta \in \N^\infty \,, \]
where we use Sweedler's notation $\Delta_\shuffle (\eta) = \sum_{(\eta)} \eta_{(1)} \otimes \eta_{(2)}$.

\begin{definition}
    \label{def:group_like_primitive}
    \emph{Group-like} elements of a Hopf algebra are those which satisfy $\Delta(x) = x \otimes x$. \emph{Primitive} elements of a Hopf algebra are those which satisfy $\Delta(x) = x \otimes \one + \one \otimes x$.
\end{definition}

Group-like elements of a Hopf algebra form a group with respect to the algebra product while primitive elements form a Lie algebra with the Lie bracket given by the commutator of the algebra product. Group-like elements are related to primitive elements through the exponential map defined by the algebra product. In particular, if $x$ is a primitive element, then $\exp(x) = \sum_{k=0}^\infty \frac{1}{k!} x^k$ is a group-like element.

We note that we identify the Hopf algebra with its completion with respect to the grading by the length of multi-indices. In particular, we allow formal sums of multi-indices of arbitrary length. Let $\alpha : \N^\infty \to \R$ be a coefficient map and let
\[ \delta(\alpha) := \sum_{\mu \in \N^\infty} \alpha(\mu)\,\mu \]
denote a formal sum of multi-indices with coefficients given by $\alpha$. By Definition \ref{def:butcher},
\[ B(\alpha) = \dF\big(\delta(\alpha)\big) \,,  \]
where $\dF$ is extended linearly. The map $\dF$ sends the primitive elements of $\HH_\cdot$ to the Lie algebra $\LL$ generated by $\ad^{j}_{-A}(B)$ for $j \in \N$ and the group-like elements to the corresponding exponentials.

\begin{proposition}
    \label{prop:character}
    The coefficient maps $\alpha$ and $1/\gamma$ of splitting methods \eqref{eq:splitting} and the exact solution are characters with respect to the shuffle product, that is,
    \[ \frac1{\gamma} (\mu \shuffle \tilde\mu) = \frac1{\gamma} (\mu) \frac1{\gamma} (\tilde\mu) \,, \quad \alpha (\mu \shuffle \tilde\mu) = \alpha (\mu) \alpha (\tilde\mu) \,, \quad \text{for } \mu, \tilde\mu \in \N^\infty \,,\]
    and $\frac1{\gamma}(\one) = \alpha(\one) = 1$.
\end{proposition}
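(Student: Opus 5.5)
The plan is to prove both character properties through one mechanism. The formal sums $\delta(1/\gamma)$ and $\delta(\alpha)$ will be shown to be group-like in the concatenation--deshuffle Hopf algebra $\HH_\cdot$. Duality of $\shuffle$ and $\Delta_\shuffle$ under $\langle\cdot,\cdot\rangle_!$ then turns group-likeness into the character property.

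The duality step is the first to settle. Write $X=\sum_\eta \frac{\beta(\eta)}{\eta!}\eta$, so that $\beta(\mu)=\langle \mu, X\rangle_!$. Then
\[ \beta(\mu\shuffle\tilde\mu)=\langle \mu\shuffle\tilde\mu, X\rangle_! = \langle \mu\otimes\tilde\mu, \Delta_\shuffle X\rangle_! . \]
If $\Delta_\shuffle X = X\otimes X$, the right-hand side equals $\beta(\mu)\beta(\tilde\mu)$. Moreover, $\beta(\one)=1$ follows from $\epsilon(X)=1$, which holds for a nonzero group-like element. So it suffices to show that $X_\gamma=\sum_\mu \frac{1}{\gamma(\mu)\mu!}\mu$ and $X_\alpha=\sum_\mu\frac{\alpha(\mu)}{\mu!}\mu$ are group-like. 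These are exactly the preimages of $B(1/\gamma)$ and $B(\alpha)$ under $\dF$.

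For $\alpha$ (the case $B_1=\dots=B_s=B$), I will mirror the proof of Proposition~\ref{prop:splitting_expansion} at the level of $\HH_\cdot$. Set
\[ P_j = \sum_{m\ge0} \frac{b_j c_j^m}{m!}(m), \]
a combination of length-one multi-indices, hence primitive. Then $\exp(P_j)$ is group-like, and so is the product
\[ G=\exp(P_s)\cdot\exp(P_{s-1})\cdots\exp(P_1), \]
since group-like elements form a group. I then expand $G$ exactly as in that proof, collecting the factor $\frac{1}{q_1!\cdots q_s!}$ from the exponentials. This should show that the coefficient of $\mu$ in $G$ is $\frac{1}{\mu!}\sum^!_{\lambda\in\Lambda_k}\prod_i b_{\lambda_i}c_{\lambda_i}^{\mu_i}=\frac{\alpha(\mu)}{\mu!}$, so $G=X_\alpha$. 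The main care needed is ordering: the concatenation order in $G$ must match $\dF(\mu)=\ad^{\mu_k}\cdots\ad^{\mu_1}$ with $\lambda$ nondecreasing. I will fix this convention once and check it against the corollary.

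For $1/\gamma$, the cleanest route is a time-ordered exponential in $\HH_\cdot$. Define
\[ P(t)=\sum_{m\ge0}\frac{t^m}{m!}(m), \]
which is primitive for each $t$, and let $X(t)$ solve $X'(t)=X(t)\cdot P(t)$ with $X(0)=\one$, matching the product order. Its iterated-integral expansion reproduces, coefficient by coefficient, the computation in the proof of Proposition~\ref{prop:butcher_exact}, so $X(h)$ with $h=1$ gives $X_\gamma$ after absorbing $h^{|\mu|}$.

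To show $X(t)$ is group-like, I compare $\Delta_\shuffle X(t)$ and $X(t)\otimes X(t)$. Using that $\Delta_\shuffle$ is an algebra morphism and $P$ is primitive, both satisfy
\[ Z'=Z\cdot(P\otimes\one+\one\otimes P), \qquad Z(0)=\one\otimes\one . \]
Uniqueness, which holds degree by degree in the grading by length, then gives equality.

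I expect the main obstacle to be this step: making the ODE argument rigorous in the completed algebra. A purely combinatorial alternative is a direct induction on the recursive definition of $\shuffle$, using $\gamma(\mu)=|\mu|\,\gamma(\mu_{1:k-1})$ and showing
\[ \frac{1}{\gamma}(\mu\shuffle\tilde\mu)\,\bigl(|\mu|+|\tilde\mu|\bigr)=\frac{1}{\gamma}\bigl((\mu_{1:k-1}\shuffle\tilde\mu)\cdot\mu_k\bigr)\text{-terms}+\cdots. \]
This is the classical proof that iterated integrals shuffle, and I would fall back on it if needed. The shuffle recursion would then be taken from the right rather than the left, which is equivalent.
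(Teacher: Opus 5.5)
Your proposal is correct and has the same skeleton as the paper's proof. Both show that the formal sum attached to the coefficient map is group-like in $\HH_\cdot$, then pass to the character property through the duality of $\shuffle$ and $\Delta_\shuffle$ under $\langle\cdot,\cdot\rangle_!$.

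The difference is in how group-likeness is obtained. The paper argues at the operator level: by Baker--Campbell--Hausdorff, $B(\alpha)$ is an exponential of an element of $\LL$, so it concludes that $\delta(\alpha)$ is the exponential of a primitive element. That step silently lifts an operator identity back to formal sums of multi-indices. This is not automatic, because $\dF$ need not be injective for concrete $A,B$. You instead build the group-like element directly in $\HH_\cdot$: a concatenation product of the $\exp(P_j)$ for $\alpha$, and a time-ordered exponential driven by the primitive $P(t)$ for $1/\gamma$. This makes the key step self-contained, at the cost of a short explicit expansion.

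Your normalization $X=\sum_\eta \beta(\eta)\eta/\eta!$ is also the consistent one. It gives both $\dF(X)=B(\beta)$ and $\langle\mu,X\rangle_!=\beta(\mu)$. The paper's $\delta(\alpha)=\sum\alpha(\mu)\mu$ is off by the factor $\mu!$. This does not affect group-likeness, since $\eta\mapsto\eta!\,\eta$ is a coalgebra automorphism for $\Delta_\shuffle$.

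Two small remarks. First, the order of the product needs fixing. With $\lambda$ nondecreasing and paired with $\mu_1,\dots,\mu_k$ from left to right, the product must be $\exp(P_1)\cdot\exp(P_2)\cdots\exp(P_s)$. The order you wrote, $\exp(P_s)\cdots\exp(P_1)$, produces nonincreasing decorations and hence a different coefficient map in general. Already at $\mu=(0,1)$ with $s=2$ the two differ by $b_1b_2a_2$. Group-likeness holds in either order, so only the identification $G=X_\alpha$ needs the fix you anticipated.

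Second, the uniqueness step for $Z'=Z\cdot(P\otimes\one+\one\otimes P)$ is not a real obstacle. The derivative of the coefficient of $\mu\otimes\tilde\mu$ involves only the coefficients of $\mu_{1:k-1}\otimes\tilde\mu$ and $\mu\otimes\tilde\mu_{1:l-1}$. Induction on $\#\mu+\#\tilde\mu$ therefore determines every coefficient as a polynomial in $t$, so no analytic care is needed in the completion.
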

\begin{proof}
    Let us prove the statement for $\alpha$ with the statement for $1/\gamma$ following from the same argument. 
    It can be seen using the Baker-Campbell-Hausdorff formula that the multi-index series $B(\alpha)$ can be written as exponentials of elements of $\LL$. Therefore, there exists a primitive element $\mu \in \N^\infty$ such that,
    \[ \delta(\alpha) = \exp^\cdot(\mu) \,, \]
    that is, $\delta(\alpha)$ is a group-like element of $\HH_\cdot$. Using the duality between $\HH_\cdot$ and $\HH_\shuffle$, we have for $\mu, \tilde\mu \in \N^\infty$,
    \[ \alpha(\mu \shuffle \tilde\mu) = \langle \delta(\alpha), \mu \shuffle \tilde\mu \rangle_! = \langle \Delta_\shuffle \big( \delta(\alpha) \big), \mu \otimes \tilde\mu \rangle_! = \langle \delta(\alpha), \mu \rangle_! \langle \delta(\alpha), \tilde\mu \rangle_! = \alpha(\mu) \alpha(\tilde\mu) \,. \]
    This finishes the proof.
\end{proof}

Proposition \ref{prop:character} relies on the correspondence between group-like elements in $\HH_\cdot$ and characters of $\HH_\shuffle$. An analogous correspondence holds for the primitive elements of $\HH_\cdot$ and infinitesimal characters of $\HH_\shuffle$.

\begin{definition}
    \label{def:infinitesimal_character}
    A coefficient map $\beta : \N^\infty \to \R$ is an \emph{infinitisimal character} if
    \[ \beta(\one) = 0 \,, \quad \beta(\mu \shuffle \eta) = 0 \,, \quad \text{for } \mu, \eta \in \N^\infty \,. \]
\end{definition}

A multi-index series $B(\beta)$ with $\beta$ an infinitesimal character can be written as a formal sum of elements of $\LL$.

\section{Composition law}
\label{sec:composition}

Given two splitting methods $\Phi_h$ and $\Psi_h$, their composition is again a splitting method and thus can be expanded as a multi-index series. In this section, we derive the coefficient map for the composition of $\Psi_h$ and $\Phi_h$ in terms of the coefficient maps of $\Phi_h$ and $\Psi_h$.

Let $\# \mu$ denote the length of the multi-index $\mu$, that is, $\# \mu = k$ if $\mu \in \N^k$.
Let us introduce the following notation for $\mu, \tilde\mu \in \N^k$,
\begin{enumerate}
    \item $\mu \leq \tilde\mu$ if and only if $\mu_i \leq \tilde\mu_i$ for $i=1,\dots,k$;
    \item $\tilde\mu \plus \mu$ denotes multi-index defined as $\tilde\mu \plus \mu = (\tilde\mu_1 + \mu_1, \dots, \tilde\mu_k + \mu_k)$;
    \item $\tilde\mu \minus \mu$ denotes multi-index defined as $\tilde\mu \minus \mu = (\tilde\mu_1 - \mu_1, \dots, \tilde\mu_k - \mu_k)$ with $\tilde\mu \minus \mu = 0$ if there exists $i$ such that $\mu_i > \tilde\mu_i$;
    \item let $\binom{\tilde\mu}{\mu} := \frac{\tilde\mu!}{(\tilde\mu \setminus \mu)! \mu!} = \prod_{i=1}^k \binom{\tilde\mu_i}{\mu_i}$ denote the multinomial coefficient.
\end{enumerate}

We consider the following product $\graft : \MM \otimes \MM \to \MM$ defined, for $\mu \in \N^k, \eta \in \N^l$ as
\begin{equation}
    \label{eq:graft}
\eta \graft \mu := \sum_{p \in \N^{\#\mu}} \frac1{p!} \eta \cdot (p \plus \mu) \,.
\end{equation}
The product $\graft : \MM \otimes \MM \to \MM$ arises in the composition of two operators $e^{hA} \dF(\mu)$ and $e^{hA} \dF(\eta)$ as is demonstrated in Proposition \ref{prop:graft}.

\begin{proposition}
    \label{prop:graft}
    Consider operators $e^{hA} \dF(\mu)$ and $e^{hA} \dF(\eta)$ with $\mu, \eta \in \N^\infty$, then,
    \[ e^{hA} \dF(\mu) \cdot e^{hA} \dF(\eta) = e^{2hA} \dF(\eta \graft \mu) \,. \]
\end{proposition}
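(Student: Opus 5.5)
The plan is to move the middle exponential $e^{hA}$ to the left past $\dF(\mu)$, using conjugation by $e^{-hA}$, and then read off the result as a sum of operators $\dF(\cdot)$. First write
\[
e^{hA}\dF(\mu)\, e^{hA}\dF(\eta) = e^{2hA}\,\big(e^{-hA}\dF(\mu)e^{hA}\big)\,\dF(\eta) .
\]
Next, use the identity $e^{-hA} X e^{hA} = \exp(\ad_{-hA})(X)$, which is the linear (non-exponentiated) form underlying \eqref{eq:commute_exp}. The map $X \mapsto e^{-hA}Xe^{hA}$ is an algebra automorphism, so it can be applied factor by factor to the product
\[
\dF(\mu) = h^{|\mu|}\ad_{-A}^{\mu_k}(B)\cdots\ad_{-A}^{\mu_1}(B).
\]

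Each factor transforms as
\[
\exp(h\,\ad_{-A})\big(\ad^{\mu_i}_{-A}(B)\big) = \sum_{p_i\in\N}\frac{h^{p_i}}{p_i!}\ad^{\mu_i+p_i}_{-A}(B).
\]
Multiplying these out and collecting the multi-index $p=(p_1,\dots,p_k)\in\N^{\#\mu}$ gives
\[
e^{-hA}\dF(\mu)e^{hA} = \sum_{p\in\N^{\#\mu}} \frac{1}{p!}\, h^{|\mu| + \sum_i p_i}\,\ad^{\mu_k+p_k}_{-A}(B)\cdots\ad^{\mu_1+p_1}_{-A}(B) = \sum_{p\in\N^{\#\mu}}\frac1{p!}\dF(p\plus\mu).
\]
The last equality uses $|p\plus\mu| = |\mu|+\sum_i p_i$, which holds because $p\plus\mu$ has the same length $k$ as $\mu$.

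Finally, observe that $\dF$ turns concatenation into operator composition in reversed order. Since $\dF(\eta)$ has its first letter $\eta_1$ rightmost, we get
\[
\dF(\nu)\,\dF(\eta) = h^{|\nu|+|\eta|}\ad^{\nu_k}_{-A}(B)\cdots\ad^{\nu_1}_{-A}(B)\,\ad^{\eta_l}_{-A}(B)\cdots\ad^{\eta_1}_{-A}(B) = \dF(\eta\cdot\nu),
\]
where we use $|\eta\cdot\nu| = |\eta|+|\nu|$. Substituting $\nu = p\plus\mu$ and extending $\dF$ linearly yields $\dF(\eta\graft\mu)$ by \eqref{eq:graft}, which proves the claim.

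The only delicate point is the status of the expansion $\exp(\ad_{-hA})$ when $A$ is unbounded. As elsewhere in the paper, I would treat all identities as formal power series in $h$, since the paper already does this in Propositions \ref{prop:butcher_exact} and \ref{prop:splitting_expansion}. Under that convention the computation is purely algebraic, and the main care needed is bookkeeping of the order of the factors, which is what produces $\eta\cdot(p\plus\mu)$ rather than $(p\plus\mu)\cdot\eta$.
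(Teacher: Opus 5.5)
Your proof is correct and follows the paper's route: commute the middle $e^{hA}$ to the left through each factor via $X e^{hA} = e^{hA}\exp(h\,\ad_{-A})(X)$, expand to obtain $\sum_{p}\frac{1}{p!}\dF(p\plus\mu)$, and then use $\dF(\nu)\dF(\eta)=\dF(\eta\cdot\nu)$. Your bookkeeping is somewhat more careful than the paper's, whose intermediate lines drop the factor $h$ in $\exp(\ad_{-A})$ and the prefactor $h^{|\mu|}$, while you verify $|p\plus\mu| = |\mu|+\sum_i p_i$ and $|\eta\cdot\nu|=|\eta|+|\nu|$ explicitly.
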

\begin{proof}
    We use the following property to prove the statement,
    \[ \ad_{-A}^j (B) e^{hA} = e^{hA} \exp(\ad_{-A}) \big( \ad_{-A}^j (B) \big) \,. \]
    The statement follows from the following computation, let $\mu \in \N^k$,
    \begin{align*}
        e^{hA} \dF(\mu) \cdot e^{hA} \dF(\eta) &= e^{hA} \ad_{-A}^{\mu_k} (B) \cdots \ad_{-A}^{\mu_1} (B) e^{hA} \dF(\eta) \\
                                               &= e^{2hA} \exp(\ad_{-A}) \big( \ad_{-A}^{\mu_k} (B) \big) \cdots \exp(\ad_{-A}) \big( \ad_{-A}^{\mu_1} (B) \big) \dF(\eta) \\
                                               &= e^{2hA} \sum_{p \in \N^{\#\mu}} \frac1{p!} \dF(p \plus \mu) \dF(\eta) \\
                                               &= e^{2hA} \dF(\eta \graft \mu) \,,
    \end{align*}
    where we use $\dF(\tilde\mu) \cdot \dF(\eta) = \dF(\eta \cdot \tilde\mu)$ and the definition of the product $\graft : \MM \otimes \MM \to \MM$.
\end{proof}

Let us consider the corresponding coproduct $\Delta_\graft : \MM \to \MM \otimes \MM$ which is adjoint to $\graft$ with respect to the inner product $\langle \cdot, \cdot \rangle_!$, see Proposition \ref{prop:graft_adjoint}. The coproduct $\Delta_\graft$ for $\mu \in \N^\infty$ is given by
\begin{equation}
    \label{eq:cograft}
\Delta_\graft (\mu) = \sum_{\substack{\mu_1 \cdot \mu_2 = \mu \\ p \in \N^{\#\mu_2}}} \binom{\mu_2}{p} \mu_1 \otimes (\mu_2 \minus p) \,.
\end{equation}

\begin{proposition}
    \label{prop:graft_adjoint}
    The coproduct $\Delta_\graft : \MM \to \MM \otimes \MM$ defined in \eqref{eq:cograft} is adjoint to the product $\graft : \MM \otimes \MM \to \MM$ defined in \eqref{eq:graft} with respect to the inner product $\langle \blank, \blank \rangle_!$, that is,
    \[ \langle \eta \graft \mu, \nu \rangle_! = \langle \eta \otimes \mu, \Delta_\graft(\nu) \rangle_! \,, \]
    for any $\mu, \eta, \nu \in \N^\infty$.
\end{proposition}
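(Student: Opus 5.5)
Since both sides of the identity are bilinear in $(\eta,\mu)$ and linear in $\nu$, and the inner product $\langle \blank,\blank\rangle_!$ is diagonal on multi-indices, I will verify the equality coefficient by coefficient. I fix $\eta \in \N^l$, $\mu \in \N^k$ and $\nu \in \N^\infty$. On the tensor product I use the induced inner product
\[ \langle \eta \otimes \mu, \nu_1 \otimes \nu_2 \rangle_! = \langle \eta,\nu_1\rangle_! \langle \mu,\nu_2\rangle_! . \]

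\emph{Left-hand side.} By \eqref{eq:graft}, $\eta \graft \mu = \sum_{p \in \N^{k}} \frac1{p!}\, \eta\cdot(p\plus\mu)$. For distinct $p$ the terms $\eta\cdot(p\plus\mu)$ are distinct multi-indices, and at most one of them equals $\nu$. So $\langle \eta\graft\mu,\nu\rangle_!$ vanishes unless $\nu$ has length $l+k$, its first $l$ entries equal $\eta$, and its tail $\nu_2 := \nu_{l+1:l+k}$ satisfies $\nu_2 \geq \mu$. In that case $p = \nu_2\minus\mu$ is uniquely determined, and the value is
\[ \frac{\nu!}{(\nu_2\minus\mu)!} = \frac{\eta!\,\nu_2!}{(\nu_2\minus\mu)!} . \]

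\emph{Right-hand side.} By \eqref{eq:cograft}, $\Delta_\graft(\nu)$ is a sum over deconcatenations $\nu=\nu_1\cdot\nu_2$ and over $p \in \N^{\#\nu_2}$ with $p\le\nu_2$. For the pairing with $\eta\otimes\mu$ to be nonzero we need $\nu_1=\eta$, which fixes the cut after position $l$, and $\nu_2\minus p=\mu$. The second condition forces $p=\nu_2\minus\mu$, which requires $\nu_2\geq\mu$ and $\#\nu_2 = k$. So the right-hand side is nonzero under exactly the same conditions as the left-hand side, and in that case it equals
\[ \binom{\nu_2}{\nu_2\minus\mu}\,\eta!\,\mu! = \frac{\nu_2!}{\mu!\,(\nu_2\minus\mu)!}\,\eta!\,\mu! = \frac{\eta!\,\nu_2!}{(\nu_2\minus\mu)!}, \]
which matches the left-hand side.

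The main obstacle is bookkeeping rather than a genuine difficulty. First, the notation $\mu_1\cdot\mu_2$ in \eqref{eq:cograft} clashes with the entries of $\mu$ and must be read as a deconcatenation including the empty factors. Second, the convention "$\tilde\mu\minus\mu = 0$ when not componentwise nonnegative" must be interpreted as that term vanishing, which is what excludes $p\not\le\nu_2$. Third, the edge cases $\mu=\one$ or $\eta=\one$ need a quick check: for $\mu=\one$ the sum reduces to $p=\one$ and $\eta\graft\one=\eta$. With these conventions, the above case match completes the proof.
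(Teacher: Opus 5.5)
Your proof is correct and is essentially the paper's argument. The paper runs the same computation as a chain of adjointness steps: first concatenation against deconcatenation, which fixes the cut $\nu = \nu_1 \cdot \nu_2$ with $\nu_1 = \eta$, and then moving $\sum_p \frac{1}{p!}\, p \plus \blank$ across the pairing to $\sum_p \binom{\nu_2}{p}\, \nu_2 \minus p$. You instead evaluate both sides directly on a single basis element $\nu$, which rests on the same identity $\frac{\nu_2!}{p!} = \binom{\nu_2}{p}\,\mu!$ at $p = \nu_2 \minus \mu$.
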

\begin{proof}
    The statement follows from the following computation for $\mu, \eta, \nu \in \N^\infty$,
    \begin{align*}
        \langle \eta \graft \mu, \nu \rangle_! &= \sum_{p \in \N^{\#\mu}} \frac1{p!} \langle \eta \cdot (p \plus \mu), \nu  \rangle_! \\
        \intertext{use the fact that deconcatenation coproduct is adjoint to concatenation,}
                                               &= \sum_{\nu_1 \cdot \nu_2 = \nu} \langle \eta, \nu_1 \rangle_! \langle \sum_{p \in \N^{\#\mu}} \frac1{p!} p \plus \mu, \nu_2 \rangle_! \\
                                               &= \sum_{\nu_1 \cdot \nu_2 = \nu} \langle \eta, \nu_1 \rangle_! \langle \mu, \sum_{p \in \N^{\#\nu_2}} \frac{\nu_2!}{(\nu_2 \minus p)!p!} \nu_2 \minus p \rangle_! \\
                                               &= \langle \eta \otimes \mu, \sum_{\substack{\nu_1 \cdot \nu_2 = \nu \\ p \in \N^{\#\nu_2}}} \binom{\nu_2}{p} \nu_1 \otimes (\nu_2 \minus p) \rangle_! = \langle \eta \otimes \mu, \Delta_\graft (\nu) \rangle_! \,.
    \end{align*}
    where we use the definition of the coproduct $\Delta_\graft : \MM \to \MM \otimes \MM$.
\end{proof}

The composition law follows.

\begin{theorem}
    \label{thm:composition}
    Let $\Phi_h$ and $\Psi_h$ be two splitting methods expanded in multi-index series with coefficient maps $\alpha_1$ and $\alpha_2$, respectively. Then, their composition $\Phi_h \circ \Psi_h$ can be expanded in multi-index series with coefficient map $\alpha_2 * \alpha_1$, that is,
    \[ \Phi_h \circ \Psi_h = e^{2hA} B(\alpha_2 * \alpha_1) \,, \quad \text{with } (\alpha_2 * \alpha_1) (\mu) := \sum_{(\mu)} \alpha_2(\mu_{(1)}) \alpha_1 (\mu_{(2)}) \,, \]
    for any $\mu \in \N^\infty$ where $\Delta_\graft (\mu) = \sum_{(\mu)} \mu_{(1)} \otimes \mu_{(2)}$.
\end{theorem}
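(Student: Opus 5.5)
The plan is to combine Proposition \ref{prop:graft} with the duality of Proposition \ref{prop:graft_adjoint}. Write the two methods as $\Phi_h = e^{hA} B(\alpha_1)$ and $\Psi_h = e^{hA} B(\alpha_2)$. By Definition \ref{def:butcher} these are $e^{hA}\dF(\delta(\alpha_i))$, with $\dF$ extended linearly to formal sums. The composition $\Phi_h \circ \Psi_h$ means applying $\Psi_h$ first, so as operators
\[ \Phi_h \circ \Psi_h = \sum_{\mu,\eta \in \N^\infty} \frac{\alpha_1(\mu)\alpha_2(\eta)}{\mu!\,\eta!}\, e^{hA}\dF(\mu)\, e^{hA}\dF(\eta) . \]
Applying Proposition \ref{prop:graft} termwise turns this into $e^{2hA}\dF(X)$ with
\[ X = \sum_{\mu,\eta} \frac{\alpha_2(\eta)\alpha_1(\mu)}{\eta!\,\mu!}\, \eta \graft \mu . \]
It then remains to show that $X = \delta\big((\alpha_2*\alpha_1)/\cdot!\big)$, i.e.\ that the coefficient of $\nu$ in $X$ equals $(\alpha_2*\alpha_1)(\nu)/\nu!$. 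That identity gives $\dF(X) = B(\alpha_2*\alpha_1)$ by Definition \ref{def:butcher}.

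To extract the coefficient of $\nu$, use the inner product: for any formal sum $Y$, the coefficient of $\nu$ in $Y$ is $\langle Y, \nu\rangle_!/\nu!$. By Proposition \ref{prop:graft_adjoint},
\[ \langle X,\nu\rangle_! = \sum_{\mu,\eta}\frac{\alpha_2(\eta)\alpha_1(\mu)}{\eta!\,\mu!}\langle \eta\otimes\mu, \Delta_\graft(\nu)\rangle_! . \]
Expanding $\Delta_\graft(\nu)=\sum_{(\nu)} \nu_{(1)}\otimes\nu_{(2)}$, the pairing $\langle\eta,\nu_{(1)}\rangle_!\langle\mu,\nu_{(2)}\rangle_!$ is nonzero only when $\eta=\nu_{(1)}$ and $\mu=\nu_{(2)}$. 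In that case it equals $\eta!\,\mu!$, which cancels the denominators. Hence
\[ \langle X,\nu\rangle_! = \sum_{(\nu)}\alpha_2(\nu_{(1)})\alpha_1(\nu_{(2)}) = (\alpha_2*\alpha_1)(\nu) , \]
counting the multinomial weights $\binom{\mu_2}{p}$ in \eqref{eq:cograft} as part of the Sweedler sum. Dividing by $\nu!$ gives the claim.

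The main obstacle is bookkeeping rather than any conceptual step. First, check that the order convention matches: Proposition \ref{prop:graft} places the left factor $\mu$ (from $\Phi_h$) as the second argument of $\graft$, which explains why $\alpha_2$ pairs with $\nu_{(1)}$. Second, justify the interchange of infinite sums in the completed algebra. This is handled by a grading argument: each $\nu$ receives contributions only from finitely many triples $(\eta,\mu,p)$, since $\#\eta+\#\mu=\#\nu$ and $p\le\nu_2$ componentwise. Hence every coefficient is a finite sum, and the formal manipulations, including applying $\dF$ termwise, are legitimate as identities of formal series in $h$.
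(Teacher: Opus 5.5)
Your proposal is correct and follows the paper's proof: write the composition as $e^{2hA}\dF$ of a $\graft$-product via Proposition~\ref{prop:graft}, then identify coefficients by pairing with $\nu$ and invoking the adjointness in Proposition~\ref{prop:graft_adjoint}. Your explicit $1/\mu!$ bookkeeping, i.e.\ working with $\sum_\mu \frac{\alpha(\mu)}{\mu!}\mu$ so that pairing with $\nu$ returns $\alpha(\nu)$, is the normalization the paper's identities $B(\alpha)=\dF(\delta(\alpha))$ and $\langle \delta(\alpha),\nu\rangle_! = \alpha(\nu)$ tacitly require, and your finiteness remark justifies the termwise manipulations that the paper leaves implicit.
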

\begin{proof}
    We write the multi-index series $B(\alpha)$ as $B(\alpha) = \dF \big( \delta(\alpha) \big)$. We use Proposition \ref{prop:graft} to obtain,
    \[ \Phi_h \circ \Psi_h = e^{hA} \dF\big( \delta(\alpha_1) \big) e^{hA} \dF\big( \delta(\alpha_2) \big) = e^{2hA} \dF\big(\delta(\alpha_2) \graft \delta(\alpha_1)\big) \,. \]
    It remains to prove that $\delta(\alpha_2) \graft \delta(\alpha_1) = \delta(\alpha_2 * \alpha_1)$, that is, for any $\nu \in \N^\infty$, we have,
    \[ \langle \delta(\alpha_2) \graft \delta(\alpha_1), \nu \rangle_! = \langle \delta(\alpha_2 * \alpha_1), \nu \rangle_! = (\alpha_2 * \alpha_1) (\nu) \,. \]
    This follows from the definition of $\alpha_2 * \alpha_1$ and Proposition \ref{prop:graft_adjoint}.
\end{proof}

The composition law can be used to analyse both the processing of splitting methods \cite{BlanesSMD24} and the construction of higher-order splitting methods from lower-order ones through composition. Since processing typically involves compositions with negative time steps, which are precisely what we seek to avoid, we do not discuss this application further. Instead, we focus on the use of the composition law for the construction and analysis of composition methods, which is demonstrated in the next section.

\subsection{Composition methods}

Given a splitting method $\Phi_h$ with multi-index series $e^{hA} B(\alpha)$ and a scaling $\rho \in \R$, the multi-index series of $\Phi_{\rho h}$ is given by,
\[ \Phi_{\rho h} = e^{\rho hA} B(\alpha_\rho) \,, \]
where $a_\rho(\mu) = \rho^{|\mu|} \alpha(\mu)$ for $\mu \in \N^\infty$. Given a second integrator $\Psi_h = e^{hA} B(\tilde\alpha)$ and $\tilde\rho \in \R$, the composition law is then given by,
\[ \Phi_{\rho h} \Psi_{\tilde\rho h} = e^{\rho hA} B(\alpha_\rho) e^{\tilde\rho hA} B(\tilde{\alpha}_{\tilde\rho}) = e^{(\rho + \tilde\rho) h A} B(\tilde\alpha_{\tilde\rho} * \alpha_\rho) \,. \]
We use composition law to recover the classical result on composition methods.

\begin{theorem}
    Let $\Phi_h$ be a splitting method of order $p$ such that $\Phi_h = e^{hA} B(\alpha)$ for some coefficient map $\alpha : \N^\infty \to \R$. Then, the composition method
    \[ \Psi_h := \Phi_{\rho_s h} \cdots \Phi_{\rho_1 h} \]
    is of order $p+1$ if and only if
    \[ \sum_{i=1}^s \rho_i = 1 \quad \text{and} \quad \sum_{i=1}^s \rho_i^{p+1} = 0 \,. \]
\end{theorem}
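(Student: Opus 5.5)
The plan is to work at the level of Lie algebra elements rather than coefficient maps, using the multi-index series formalism only to read off orders.

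\textbf{Step 1: write $\Phi_h$ as an exponential.} Since $\Phi_h$ has order $p$, we have $\alpha(\mu) = 1/\gamma(\mu)$ for $|\mu| \leq p$. By Proposition \ref{prop:character}, $\delta(\alpha)$ and $\delta(1/\gamma)$ are group-like in $\HH_\cdot$. Their logarithms are therefore primitive, and they agree in all grades $\leq p$. It is more convenient to return to operators. The exact flow is $e^{hA+hB}$, and I write
\[ \Phi_h = e^{hA+hB + h^{p+1} E + \OO(h^{p+2})} \,, \]
where $E$ is a formal combination of nested commutators of $A$ and $B$ of total degree $p+1$. This form follows from $\Phi_h = e^{hA}B(\alpha)$ together with the group-like property: $\log(e^{hA}B(\alpha))$ lies in the Lie algebra generated by $A$ and $\LL$. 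It agrees with $hA + hB$ up to grade $p$, because the exact solution $e^{hA}B(1/\gamma)$ has logarithm exactly $hA+hB$.

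\textbf{Step 2: compose.} Scaling gives $\Phi_{\rho h} = e^{\rho h(A+B) + \rho^{p+1}h^{p+1}E + \OO(h^{p+2})}$. Composing $s$ such exponentials with BCH, every commutator correction involves at least two factors of degree $\geq 1$. The only new terms of degree $\leq p+1$ are therefore commutators among the leading parts $\rho_i h(A+B)$, and these vanish because all of them are multiples of the same element $A+B$. Brackets of a leading part with the $h^{p+1}E$ terms are of degree $\geq p+2$. Hence
\[ \Psi_h = \exp\Big( \big(\textstyle\sum_i \rho_i\big) h(A+B) + \big(\textstyle\sum_i \rho_i^{p+1}\big) h^{p+1} E + \OO(h^{p+2}) \Big) \,. \]
By Theorem \ref{thm:composition}, this operator is again of the form $e^{(\sum_i\rho_i)hA}B(\cdot)$, so it fits the multi-index framework.

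\textbf{Step 3: compare with the exact solution.} If $\sum_i\rho_i=1$ and $\sum_i\rho_i^{p+1}=0$, then $\Psi_h$ agrees with $e^{h(A+B)}$ through grade $p+1$, so $\Psi_h$ has order $p+1$. For the converse, grade $1$ forces $\sum_i\rho_i = 1$. Grade $p+1$ then forces $(\sum_i\rho_i^{p+1})E = 0$, and hence $\sum_i\rho_i^{p+1}=0$, provided $E \neq 0$.

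\textbf{Main obstacle.} The difficulty is interpreting the statement correctly: "order $p$" must mean exactly order $p$, i.e.\ $E\neq 0$, so that some multi-index $\mu$ with $|\mu|=p+1$ has $\alpha(\mu)\neq 1/\gamma(\mu)$. I will state this as the standing assumption. I will also translate the grade-$(p+1)$ comparison back into coefficient maps. Both $\log^\cdot\delta(\alpha)$ and $\log^\cdot\delta(1/\gamma)$ are primitive. In degree $p+1$ their difference $\epsilon$ is an infinitesimal character (Definition \ref{def:infinitesimal_character}), and the composite has defect $(\sum_i\rho_i^{p+1})\epsilon$. The part needing care is that the $e^{hA}$ prefactors do not spoil this linearity; Proposition \ref{prop:graft} handles it, since grafting only shifts $\mu\mapsto p\plus\mu$, and these shifts are already accounted for by working with $hA+hB$ directly.
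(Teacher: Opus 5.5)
Your argument is correct, but it is the classical BCH argument rather than the paper's proof. The paper never takes logarithms. It writes $\Psi_h = e^{hA}B(\alpha_{\rho_1}*\cdots*\alpha_{\rho_s})$ via the composition law (Theorem~\ref{thm:composition}). From the iterated coproduct $\Delta_\graft(\mu)$ it separates the terms in which one factor is $\mu$ itself and all others are $\one$; these contribute $\sum_i\rho_i^{p+1}\alpha(\mu)$ for $|\mu|=p+1$. All remaining terms involve only multi-indices of size at most $p$, where $\alpha$ may be replaced by $1/\gamma$. The paper then compares with the identical expansion of $1/\gamma=1/\gamma_{\rho_1}*\cdots*1/\gamma_{\rho_s}$, valid because composing exact flows with $\sum_i\rho_i=1$ gives the exact flow. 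Subtracting the two expansions yields, for $|\mu|=p+1$,
\[ (\alpha_{\rho_1}*\cdots*\alpha_{\rho_s})(\mu)-\frac{1}{\gamma(\mu)}=\Big(\sum_{i=1}^s\rho_i^{p+1}\Big)\Big(\alpha(\mu)-\frac{1}{\gamma(\mu)}\Big) \,. \]
This is exactly the coefficient-level defect formula you aim to recover at the end of your sketch, obtained here by a direct computation inside the multi-index framework.

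Your route buys transparency: the commutation of the leading parts $\rho_ih(A+B)$ explains why only $\sum_i\rho_i^{p+1}$ survives. It also treats both directions. The paper's proof only shows sufficiency, and you correctly point out that necessity holds only if $\Phi_h$ has order exactly $p$. If $E=0$, every $\rho$ with $\sum_i\rho_i=1$ works, and the statement leaves this assumption implicit.

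The price is twofold. You work in the completed free Lie algebra generated by $hA$ and $hB$, formally outside the multi-index setting. And translating $\Psi_h=e^{h(A+B)}+\OO(h^{p+2})$ back into $\alpha_\Psi(\mu)=1/\gamma(\mu)$ for $|\mu|\le p+1$ uses linear independence of the operators $\dF(\mu)$; the paper relies on the same fact when it calls its order conditions necessary and sufficient.

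Two simplifications are available. First, $\log\Phi_h$ is a Lie series in $A$ and $B$ directly by BCH, since $\Phi_h$ is a product of exponentials, so Proposition~\ref{prop:character} is not needed. Second, your closing remark about Proposition~\ref{prop:graft} can be dropped, since Step~2 never separates the $e^{hA}$ prefactor.
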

\begin{proof}
    Following Theorem \ref{thm:composition}, the composition method $\Psi_h$ can be written as
    \[ \Psi_h = e^{\sum_i \rho_i h A} B(\alpha_{\rho_1} * \cdots \alpha_{\rho_s}) = e^{hA} B(\alpha_{\rho_1} * \cdots \alpha_{\rho_s}) \,, \]
    where we use the assumption $\sum_i \rho_i = 1$. It remains to be checked that
    \[ (\alpha_{\rho_1} * \cdots * \alpha_{\rho_s}) (\mu) = 1/\gamma(\mu) \,, \quad \mu \in \N^\infty \,, |\mu| = p+1 \,. \]
    Let $\Delta_\graft (\mu) := \mu \otimes \one + \one \otimes \mu + \sum_{\widetilde{(\mu)}} \mu_{(1)} \otimes \mu_{(2)}$. We use the definition of the convolution product and the fact that $\Phi_h$ having order $p$ implies $\alpha(\mu) = 1/\gamma(\mu)$ for $|\mu| \leq p$,
    \begin{align*}
        (\alpha_{\rho_1} * \cdots * \alpha_{\rho_s}) (\mu) &= \sum_{i=1}^s \alpha_{\rho_i} (\mu) + \sum_{\widetilde{(\mu)}} \alpha_{\rho_1} (\mu_{(1)}) \cdots \alpha_{\rho_s} (\mu_{(s)})
        \intertext{where $|\mu_{(i)}| \leq p$. We use the definition of $\alpha_{\rho_i}$ and the fact that $\Phi_h$ is of order $p$,}
                                                               &= \sum_{i=1}^s \rho_i^{p+1} \alpha (\mu) + \sum_{\widetilde{(\mu)}} 1/\gamma_{\rho_1} (\mu_{(1)}) \cdots 1/\gamma_{\rho_s} (\mu_{(s)})
                                                               \intertext{apply the assumption $\sum_i \rho_i^{p+1} = 0$,}
                                                               &= \sum_{\widetilde{(\mu)}} 1/\gamma_{\rho_1} (\mu_{(1)}) \cdots 1/\gamma_{\rho_s} (\mu_{(s)}) = 1/\gamma(\mu) \,,
    \end{align*}
    where the last equality follows from
    \begin{align*}
        1/\gamma(\mu) &= (1/\gamma_{\rho_1} * \cdots * 1/\gamma_{\rho_s}) (\mu) \\
                      &= \sum_{i=1}^s \rho_i^{p+1} \gamma (\mu) + \sum_{\widetilde{(\mu)}} 1/\gamma_{\rho_1} (\mu_{(1)}) \cdots 1/\gamma_{\rho_s} (\mu_{(s)}) \\
                      &= \sum_{\widetilde{(\mu)}} 1/\gamma_{\rho_1} (\mu_{(1)}) \cdots 1/\gamma_{\rho_s} (\mu_{(s)}) \,,
    \end{align*}
    where we used the assumptions $\sum_i \rho_i = 1$ and $\sum_i \rho_i^{p+1} = 0$ again and the fact that the composition of exact solutions is again an exact solution.
\end{proof}

\section{Substitution law}
\label{sec:substitution}

Let $T(\MM)$ be the tensor algebra generated by $\MM$ with the product denoted by $\odot$ and the unit $\one_T$.
Let us consider the \emph{insertion} product $\ins : T(\MM) \otimes \MM \to \MM$ defined for $\mu \in \N^k$ by
\[ \eta_1 \odot \cdots \odot \eta_k \ins \mu := \sum_{\substack{p_i \in \N^{\#\eta_i}\\ \sum_j p_{i,j} = \mu_i}} \frac{\mu!}{p_1! \dots p_k!} (p_1 \plus \eta_1) \cdot \cdots \cdot (p_k \plus \eta_k) \,, \]
where the sum is over all multi-indices $p_i$ of length $\#\eta_i$ for $i=1,\dots,k$ such that their sum equals to $\mu$. For $l \neq k$, we have $\eta_1 \odot \cdots \odot \eta_l \ins \mu = 0$.

Note that $\ad_{-A}$ acts as derivation with respect to the Lie bracket in $\LL$. We identify the universal enveloping algebra $\UU(\LL)$ with the tensor algebra of $\LL$ and extend the action $\ad_{-A}$ to $\UU(\LL)$ by defining $\ad_{-A} (x \cdot y) = \ad_{-A}(x) \cdot y + x \cdot \ad_{-A}(y)$ for $x, y \in \UU(\LL)$.

\begin{proposition}
    \label{prop:insertion}
    Let $\mu \in \N^k$ be a multi-index of length $k$ and let $\eta_1, \dots, \eta_k \in \N^\infty$, then,
    \[ \ad^{\mu_k}_{-A} (\dF(\eta_k)) \cdots \ad^{\mu_1}_{-A} (\dF(\eta_1)) = \dF(\eta_1 \odot \cdots \odot \eta_k \ins \mu) \,. \]
\end{proposition}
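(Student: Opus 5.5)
The identity should follow from the Leibniz rule for $\ad_{-A}$, applied factor by factor and then reassembled with the concatenation rule $\dF(\tilde\eta)\cdot\dF(\eta) = \dF(\eta\cdot\tilde\eta)$ from the proof of Proposition \ref{prop:graft}. The extension of $\ad_{-A}$ to $\UU(\LL)$ needs no separate well-definedness check. On the operator level, $\ad_{-A}(X) = [-A,X]$ is a commutator with a fixed operator, so it is automatically a derivation of operator composition: $\ad_{-A}(XY) = \ad_{-A}(X)Y + X\ad_{-A}(Y)$. This agrees with the extension in the statement.

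\textbf{Step 1 (one factor).} Fix $\eta \in \N^l$ and set $x_j := \ad^{\eta_j}_{-A}(B)$, so that $\dF(\eta) = h^{|\eta|} x_l \cdots x_1$. Iterating the derivation property gives the general Leibniz formula, proved by induction on $m$:
\[ \ad^{m}_{-A}(x_l \cdots x_1) = \sum_{\substack{p \in \N^l \\ \sum_j p_j = m}} \frac{m!}{p!}\, \ad^{p_l}_{-A}(x_l) \cdots \ad^{p_1}_{-A}(x_1) \,. \]
Since $\ad^{p_j}_{-A}(x_j) = \ad^{\eta_j + p_j}_{-A}(B)$, each summand is, up to a power of $h$, exactly $\dF(p \plus \eta)$.

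\textbf{Step 2 (powers of $h$).} We have $|p \plus \eta| = |\eta| + \sum_j p_j = |\eta| + m$, so each summand carries $h^{m}$ more than $\dF(\eta)$ does. I would make this consistent by reading the left-hand side with $\ad_{-hA}$ in place of $\ad_{-A}$, which is the convention that makes the $h$-degrees match. Equivalently, one can regard $\dF$ as homogeneous of degree $|\cdot|$ and compare coefficients of equal $h$-powers. With this convention,
\[ \ad^{\mu_i}_{-A}\big(\dF(\eta_i)\big) = \sum_{\substack{p_i \in \N^{\#\eta_i} \\ \sum_j p_{i,j} = \mu_i}} \frac{\mu_i!}{p_i!}\, \dF(p_i \plus \eta_i) \,. \]

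\textbf{Step 3 (assemble).} Multiply these expressions for $i = k, \dots, 1$ in the order of the left-hand side. The rule $\dF(\tilde\eta)\dF(\eta) = \dF(\eta\cdot\tilde\eta)$ turns the product $\dF(p_k \plus \eta_k)\cdots\dF(p_1 \plus \eta_1)$ into $\dF\big((p_1 \plus \eta_1)\cdot\cdots\cdot(p_k \plus \eta_k)\big)$. The scalar factors combine as $\prod_i \mu_i!/p_i! = \mu!/(p_1!\cdots p_k!)$. Summing over all admissible $(p_1,\dots,p_k)$ and using linearity of $\dF$ then gives exactly $\dF(\eta_1\odot\cdots\odot\eta_k \ins \mu)$ by the definition of $\ins$.

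The main obstacle is not combinatorial but bookkeeping: ensuring that the powers of $h$ (Step 2) and the reversed ordering between operator products and concatenation of multi-indices (Step 3) are handled consistently. I would fix the convention for $h$ first and then verify the small case $k = 1$, $\eta_1 = (0,0)$, $\mu = (1)$ by hand. In that case both sides should equal $\dF((1,0)) + \dF((0,1))$.
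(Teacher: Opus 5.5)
Your proposal is correct and follows the paper's proof: apply the Leibniz rule for $\ad_{-A}$ to each factor $\dF(\eta_i)$ to get $\sum \frac{\mu_i!}{p_i!}\dF(p_i \plus \eta_i)$, then reassemble with $\dF(\tilde\eta)\dF(\eta)=\dF(\eta\cdot\tilde\eta)$. Your Step 2 also fixes a point the paper skips. Its proof silently drops the factors $h^{|\eta_i|}$, and as written the two sides differ by $h^{\mu_1+\cdots+\mu_k}$; your reading with $\ad_{-hA}$ removes this mismatch and is exactly the form used in the substitution law, since there $h^{|\mu|}$ and $B_j=\frac1h B(\beta_j)$ leave precisely this factor.
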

\begin{proof}
    Using the extension of the action $\ad_{-A}$ to $\UU(\LL)$, we have,
    \begin{align*}
        \ad^{\mu_i}_{-A} (\dF(\eta_i)) &= \ad^{\mu_i}_{-A} \big(\ad^{\eta_{i,\#\eta_i}}_{-A} (B) \cdots \ad^{\eta_{i,1}}_{-A} (B) \big) \\
                                       &= \sum_{\substack{p \in \N^{\#\eta_i} \\ \sum_j p_j = \mu_i}} \frac{\mu_i!}{p!} \ad^{\eta_{i,\#\eta_i} + p_{\#\eta_i}}_{-A} (B) \cdots \ad^{\eta_{i,1} + p_1}_{-A} (B) = \sum_{\substack{p \in \N^{\#\eta_i} \\ \sum_j p_j = \mu_i}} \frac{\mu_i!}{p!} \dF(p \plus \eta_i) \,.
    \end{align*}
    Therefore, we obtain,
    \begin{align*}
        \ad^{\mu_k}_{-A} (\dF(\eta_k)) \cdots \ad^{\mu_1}_{-A} (\dF(\eta_1)) &= \sum_{\substack{p_i \in \N^{\#\eta_i} \\ \sum_j p_{i,j} = \mu_i}} \frac{\mu_1! \dots \mu_k!}{p_1! \cdots p_k!} \dF(p_k \plus \eta_k) \cdots \dF(p_1 \plus \eta_1) \\
                                                                             &= \sum_{\substack{p_i \in \N^{\#\eta_i} \\ \sum_j p_{i,j} = \mu_i}} \frac{\mu!}{p_1! \cdots p_k!} \dF\big((p_1 \plus \eta_k) \cdot \cdots \cdot (p_k \plus \eta_k) \big) \,,
    \end{align*}
    which agrees with the definition of the insertion product $\ins : T(\MM) \otimes \MM \to \MM$.
\end{proof}

Let us define the coproduct $\Delta_\ins : \MM \to T(\MM) \otimes \MM$ which is adjoint to the insertion product $\ins : T(\MM) \otimes \MM \to \MM$ with respect to the inner product $\langle \cdot, \cdot \rangle_!$. For $\mu \in \N^\infty$, we have,
\[ \Delta_\ins (\mu) := \sum_{\substack{\pi \in P(\mu) \\ p_i \in \N^{\#\pi_i}}} \binom{\pi_1}{p_1} \cdots \binom{\pi_k}{p_k} (\pi_1 \minus p_1) \odot \cdots \odot (\pi_k \minus p_k) \otimes p_\Sigma \,, \]
where $p_\Sigma := (\sum_j p_{1,j}, \dots, \sum_j p_{k,j})$ and $P(\mu)$ is the set of all partitions of $\mu$, that is,
\[P(\mu) := \{ (\pi_1, \dots, \pi_k) \; | \; \pi_1 \cdot \cdots \cdot \pi_k = \mu \,, \pi_i \neq \one \} \,. \]
 and $1 \leq k \leq \#\mu$ is the number of parts in a partition $\pi$.

\begin{proposition}
    \label{prop:insertion_adjoint}
    The coproduct $\Delta_\ins : \MM \to T(\MM) \otimes \MM$ defined as above is adjoint to the insertion product $\ins : T(\MM) \otimes \MM \to \MM$ with respect to the inner product $\langle \cdot, \cdot \rangle_!$, that is,
    \[ \langle \eta_1 \odot \cdots \odot \eta_k \ins \mu, \nu \rangle_! = \langle \eta_1 \odot \cdots \odot \eta_k \otimes \mu, \Delta_\ins (\nu) \rangle_! \,, \]
    for any $\eta_1 \odot \cdots \odot \eta_k \in T(\MM)$ for $\mu \in \N^k$ and any $\nu \in \N^\infty$.
\end{proposition}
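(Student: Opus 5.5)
The plan is to mirror the proof of Proposition \ref{prop:graft_adjoint}. First we fix the inner product on $T(\MM)\otimes\MM$: it pairs $\eta_1\odot\cdots\odot\eta_l\otimes\mu$ with $\nu_1\odot\cdots\odot\nu_m\otimes\tilde\mu$ by $\delta_{lm}\prod_i\langle\eta_i,\nu_i\rangle_!\,\langle\mu,\tilde\mu\rangle_!$. Both sides of the claimed identity are then bilinear, so it suffices to check it for a fixed basis element $\nu\in\N^\infty$. We then expand the left-hand side using the definition of $\ins$:
\[
\langle \eta_1\odot\cdots\odot\eta_k\ins\mu,\nu\rangle_! = \sum_{\substack{p_i\in\N^{\#\eta_i}\\ \sum_j p_{i,j}=\mu_i}} \frac{\mu!}{p_1!\cdots p_k!}\,\big\langle (p_1\plus\eta_1)\cdot\cdots\cdot(p_k\plus\eta_k),\nu\big\rangle_! .
\]

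The next step is to use that $\langle\cdot,\cdot\rangle_!$ is multiplicative under concatenation. Since $(\alpha\cdot\beta)!=\alpha!\,\beta!$ and equality of concatenations with fixed part lengths is equality of the parts, the $k$-fold deconcatenation is adjoint to concatenation. A term therefore contributes only if $\nu=\pi_1\cdots\pi_k$ with $\#\pi_i=\#\eta_i$ and $\pi_i=p_i\plus\eta_i$. Such a decomposition is a partition $\pi\in P(\nu)$ into nonempty parts, as long as the $\eta_i\neq\one$. Given $\pi$, the $p_i$ are forced to be $p_i=\pi_i\minus\eta_i$, which requires $\eta_i\le\pi_i$, and the constraint $\sum_jp_{i,j}=\mu_i$ becomes $p_\Sigma=\mu$. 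The surviving term then equals
\[
\frac{\mu!}{\prod_i p_i!}\prod_i \pi_i! \;=\; \prod_i\binom{\pi_i}{p_i}\,\prod_i(\pi_i\minus p_i)!\;\mu! .
\]
We read this off as $\prod_i\binom{\pi_i}{p_i}\,\prod_i\langle\eta_i,\pi_i\minus p_i\rangle_!\,\langle\mu,p_\Sigma\rangle_!$. This is exactly the coefficient of $(\pi_1\minus p_1)\odot\cdots\odot(\pi_k\minus p_k)\otimes p_\Sigma$ in $\Delta_\ins(\nu)$ paired against $\eta_1\odot\cdots\odot\eta_k\otimes\mu$.

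Conversely, every term of $\Delta_\ins(\nu)$ pairing nontrivially with $\eta_1\odot\cdots\odot\eta_k\otimes\mu$ has $k$ parts, $\pi_i\minus p_i=\eta_i$ (so $\eta_i\le\pi_i$) and $p_\Sigma=\mu$. This gives a bijection between the nonzero terms on the two sides, and summing over $\pi\in P(\nu)$ proves the identity.

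The main obstacle is bookkeeping rather than any conceptual step. Two points need care. First, the convention $\tilde\mu\minus\mu=0$ when some $\mu_i>\tilde\mu_i$ must not create spurious terms. Second, the mismatched-length cases must give zero on both sides: $l\neq k$ in the definition of $\ins$, and partitions of $\nu$ into a number of parts different from $\#\mu$. The case of some $\eta_i=\one$ also needs a convention, since parts in $P(\nu)$ are nonempty. We expect to exclude it, or to note that it matches only when $\mu_i=0$, and to handle it by a brief remark.
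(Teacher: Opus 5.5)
Your proposal is correct and follows essentially the same route as the paper's proof. Both expand $\ins$ by its definition, use that deconcatenation over $P(\nu)$ is adjoint to concatenation, and transfer the shift $p_i \plus \eta_i$ to $\pi_i \minus p_i$ at the cost of $\binom{\pi_i}{p_i}$, which is how $\Delta_\ins$ is built; you organize this as a term-by-term bijection (each side has at most one nonzero term) rather than as a chain of formal sum manipulations.

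Your caveat about $\eta_i=\one$ is well founded. With the paper's $P(\nu)$ excluding empty parts, the identity fails when $\eta_i=\one$ and $\mu_i=0$: for $k=2$, $\eta_1=\nu=(0)$, $\eta_2=\one$, $\mu=(0,0)$, the left side is $1$ and the right side is $0$. The paper's proof tacitly assumes $\eta_i\neq\one$, which is harmless in Theorem~\ref{thm:substitution} since the $\beta_j$ are infinitesimal characters with $\beta_j(\one)=0$.
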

\begin{proof}
    The statement follows from the following computation for $\eta_1 \odot \cdots \odot \eta_k \in T(\MM)$, $\mu \in \N^k$ and $\nu \in \N^\infty$,
    \begin{align*}
        \langle \eta_1 \odot \cdots \odot \eta_k \ins \mu, \nu \rangle_! &= \sum_{\substack{p_i \in \N^{\#\eta_i} \\ \sum_j p_{i,j} = \mu_i}} \frac{\mu!}{p_1! \cdots p_k!} \langle (p_1 \plus \eta_1) \cdot \cdots \cdot (p_k \plus \eta_k), \nu  \rangle_! \\
                                                                             &= \langle \Big( \sum_{\substack{p_1 \in \N^{\#\eta_1} \\ \sum_j p_{1,j} = \mu_1}} \frac{\mu_1!}{p_1!} p_1 \plus \eta_1 \Big) \otimes \cdots \otimes \Big( \sum_{\substack{p_k \in \N^{\#\eta_k} \\ \sum_j p_{k,j} = \mu_k}} \frac{\mu_k!}{p_k!} p_k \plus \eta_k \Big), \\
                                                                             &\qquad\qquad\qquad\qquad \sum_{\pi \in P(\nu)} \pi_1 \otimes \cdots \otimes \pi_k \rangle_! \\
                                                                             &= \sum_{\pi \in P(\nu)} \langle \sum_{\substack{p_1 \in \N^{\#\eta_1} \\ \sum_j p_{1,j} = \mu_1}} \frac{\mu_1!}{p_1!} p_1 \plus \eta_1, \pi_1 \rangle_! \cdots \\
                                                                             &\qquad\qquad\qquad\qquad \langle \sum_{\substack{p_k \in \N^{\#\eta_k} \\ \sum_j p_{k,j} = \mu_k}} \frac{\mu_k!}{p_k!} p_k \plus \eta_k, \pi_k \rangle_! \\
                                                                             \intertext{use the identity $\frac{\pi_i!}{(\pi_i\minus p_i)! p_i!} = \binom{\pi_i}{p_i}$,}
                                                                             &= \sum_{\pi \in P(\nu)} \langle \eta_1, \sum_{\substack{p_1 \in \N^{\#\pi_1} \\ \sum_j p_{1,j} = \mu_1}} \mu_1! \binom{\pi_1}{p_1} \pi_1 \minus p_1 \rangle_! \cdots \\
                                                                             &\qquad\qquad\qquad\qquad \langle \eta_k, \sum_{\substack{p_k \in \N^{\#\pi_k} \\ \sum_j p_{k,j} = \mu_k}} \mu_k! \binom{\pi_k}{p_k} \pi_k \minus p_k \rangle_! \\
                                                                             &= \sum_{\pi \in P(\nu)} \langle \eta_1 \odot \cdots \odot \eta_k, \\
                                                                             &\quad\quad\quad\quad \sum_{\substack{p_i \in \N^{\#\pi_i} \\ \sum_j p_{i,j} = \mu_i}} \mu! \binom{\pi_1}{p_1} \cdots \binom{\pi_k}{p_k} (\pi_1 \minus p_1) \odot \cdots \odot (\pi_k \minus p_k) \rangle_! \\
                                                                             \intertext{let $p_\Sigma := (\sum_j p_{1,j}, \dots, \sum_j p_{k,j})$, then,}
                                                                             &= \langle \eta_1 \odot \cdots \odot \eta_k \otimes \mu,  \\ 
                                                                             &\quad\quad\quad\quad \sum_{\substack{\pi \in P(\nu) \\ p_i \in \N^{\#\pi_i}}} \binom{\pi_1}{p_1} \cdots \binom{\pi_k}{p_k} (\pi_1 \minus p_1) \odot \cdots \odot (\pi_k \minus p_k) \otimes p_\Sigma \rangle_! \,.
    \end{align*}
    which agrees with the definition of the coproduct $\Delta_\ins : \MM \to T(\MM) \otimes \MM$.
\end{proof}

Let $\beta_\lambda (\eta)$ for $\lambda \in \Lambda_k$ and $\eta = \eta_1 \odot \cdots \odot \eta_k \in T(\MM)$ be defined as
\[ \beta_\lambda (\eta) := \beta_{\lambda_1} (\eta_1) \cdots \beta_{\lambda_k} (\eta_k) \,. \]
We are ready to prove the substitution law for multi-index series.

\begin{theorem}
    \label{thm:substitution}
    Let $\Phi_h = B_\Lambda(\alpha)$ be a generalized modified splitting method \eqref{eq:splitting_gen} expanded as a decorated multi-index series. Let $B_j = B + C_j = \frac1hB(\beta_j)$ where $C_j$ is an arbitrary linear combination of nested commutators of $A$ and $B$. Then, $\Phi_h$ can be expanded as a multi-index series $B(\beta \star \alpha)$ with coefficient map $\beta \star \alpha$ given by,
    \[ (\beta \star \alpha) (\mu) := \sum_{(\mu)} \sum^!_{\lambda \in \Lambda_{\#\mu_{(2)}}} \beta_\lambda (\mu_{(1)}) \alpha(\mu_{(2)}, \lambda) \,, \]
    where $\Delta_\ins (\mu) = \sum_{(\mu)} \mu_{(1)} \otimes \mu_{(2)}$.
\end{theorem}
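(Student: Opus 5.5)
Starting from Proposition \ref{prop:splitting_expansion}, I would write the method as
\[ \Phi_h = e^{hA} B_\Lambda(\alpha) = e^{hA} \sum^!_{(\mu,\lambda) \in \N^\infty_\Lambda} \frac{\alpha(\mu,\lambda)}{\mu!}\, \dF(\mu,\lambda) \,, \]
and then substitute $hB_j = B(\beta_j) = \dF(\delta(\beta_j))$ into each factor. The goal is to rewrite $\dF(\mu,\lambda)$ for $\mu \in \N^k$ in terms of undecorated multi-indices. The factor $h^{|\mu|}$ splits as $h^{k}\cdot h^{\sum_i \mu_i}$. The power $h^k$ combines with the $k$ operators $B_{\lambda_i}$ to give $hB_{\lambda_i} = \dF(\delta(\beta_{\lambda_i}))$. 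The remaining powers $h^{\mu_i}$ are absorbed into the $\ad_{-A}^{\mu_i}$: since $\dF$ carries $h^{|\eta|}$, raising each entry by $p_j$ with $\sum_j p_j = \mu_i$ increases $|\eta|$ by exactly $\mu_i$. This gives
\[ \dF(\mu,\lambda) = \ad_{-A}^{\mu_k}\big(\dF(\delta(\beta_{\lambda_k}))\big) \cdots \ad_{-A}^{\mu_1}\big(\dF(\delta(\beta_{\lambda_1}))\big) \,. \]

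Next I would expand each $\delta(\beta_{\lambda_i}) = \sum_{\eta_i} \beta_{\lambda_i}(\eta_i)\, \eta_i / \eta_i!$. The $1/\eta!$ normalization is needed so that $\langle \delta(\beta), \eta\rangle_! = \beta(\eta)$, matching the convention $\langle \delta(\alpha),\mu\rangle_! = \alpha(\mu)$ used in Proposition \ref{prop:character}; I would fix this convention carefully at the outset. Using multilinearity and Proposition \ref{prop:insertion}, the product becomes
\[ \sum_{\eta_1,\dots,\eta_k} \beta_\lambda(\eta_1\odot\cdots\odot\eta_k)\, \dF(\eta_1\odot\cdots\odot\eta_k \ins \mu) \,, \]
with the appropriate factorial weights. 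Summing over $(\mu,\lambda)$ with the symmetric weights of $\sum^!$ and the factor $\alpha(\mu,\lambda)/\mu!$ yields $\Phi_h = e^{hA}\dF(X)$ with
\[ X = \sum^!_{(\mu,\lambda)} \sum_{\eta} \frac{\alpha(\mu,\lambda)}{\mu!}\, \beta_\lambda(\eta)\, \frac{\eta \ins \mu}{\eta!} \,. \]

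It remains to show $X = \delta(\beta\star\alpha)$, i.e.\ that $\langle X,\nu\rangle_! = (\beta\star\alpha)(\nu)$ for every $\nu$. Pairing with $\nu$ and applying Proposition \ref{prop:insertion_adjoint} gives
\[ \langle \eta\ins\mu,\nu\rangle_! = \langle \eta\otimes\mu, \Delta_\ins(\nu)\rangle_! \,. \]
The weights $1/\eta!$ and $1/\mu!$ cancel against the $!$-pairing on $T(\MM)\otimes\MM$, where the inner product extends factorwise to $\odot$-words. What remains is
\[ \sum_{(\nu)} \sum^!_{\lambda} \beta_\lambda(\nu_{(1)})\,\alpha(\nu_{(2)},\lambda) \,, \]
which is the claimed formula. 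The length constraint $\#\nu_{(2)} = $ (number of tensor factors in $\nu_{(1)}$) holds automatically, because $\ins$ vanishes on mismatched lengths. I would also note that the statement omits the prefactor $e^{hA}$ and read it as $\Phi_h = e^{hA}B(\beta\star\alpha)$.

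I expect the main obstacle to be the normalization bookkeeping rather than anything conceptual. Three things must line up: the $h$-powers (checked above), the factorials $\mu!/(p_1!\cdots p_k!)$ inside $\ins$ against the binomials in $\Delta_\ins$ (already handled by Proposition \ref{prop:insertion_adjoint}), and the symmetric-sum weights $1/(q_1!\cdots q_s!)$, which pass through unchanged because $\lambda$ is never altered by the substitution. A further subtlety is that $\dF$ is not injective, so the identity should be proved at the level of formal sums $X=\delta(\beta\star\alpha)$ before applying $\dF$. This is exactly what the adjointness argument provides.
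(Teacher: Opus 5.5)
Your proposal is correct and is essentially the paper's proof. It substitutes $hB_j=\dF(\delta(\beta_j))$, uses Proposition~\ref{prop:insertion} to write $B_\Lambda(\alpha)$ as $\dF$ of a formal sum of insertion products, and identifies coefficients by pairing with $\nu$ via the adjointness of Proposition~\ref{prop:insertion_adjoint}. Your normalization $\delta(\beta)=\sum_\eta \beta(\eta)\,\eta/\eta!$ is the one that makes $B(\beta)=\dF(\delta(\beta))$ and $\langle\delta(\beta),\eta\rangle_!=\beta(\eta)$ hold simultaneously, which the paper's literal definition does not; also, the $h^{\sum_i\mu_i}$ absent from your displayed formula for $\dF(\mu,\lambda)$ is the same factor Proposition~\ref{prop:insertion} tacitly absorbs, so your combined identity $\dF(\mu,\lambda)=\sum_\eta \beta_\lambda(\eta)\,\dF(\eta\ins\mu)/\eta!$ is right.
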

Note that the fact that $B(\beta_j)$ for $j = 1, \dots, s$ is an arbitrary linear combination of nested commutators of $A$ and $B$ implies that $\beta_j$ is an infinitesimal character, see Definition \ref{def:infinitesimal_character}.
\begin{proof}
    Consider decorated multi-index series $B_\Lambda (\alpha)$ and $B(\beta_j)$ for $j=1,\dots,s$ with $\beta_j$ an infinitesimal character. Let $B_j = \frac1hB(\beta_j)$ in $B_\Lambda (\alpha)$, then using Proposition \ref{prop:insertion}, we have,
    \begin{align*}
        B_\Lambda (\alpha) &= \sum_{k=0}^\infty \sum^!_{(\mu, \lambda) \in \N^k_\Lambda} \frac{\alpha(\mu, \lambda)}{\mu!} \dF(\mu, \lambda) \\
                           &= \sum_{k=0}^\infty \sum_{\lambda \in \Lambda_k}^! \dF\Big( \delta(\beta_{\lambda_1}) \odot \cdots \odot \delta(\beta_{\lambda_k}) \ins \delta_k \big(\alpha(\blank, \lambda)\big) \Big) \,,
\end{align*}
    where $\alpha (\blank, \lambda) : \N^\infty \to \R$ for $\lambda \in \Lambda_k$, $\delta_k(\alpha)$ is a formal sum over multi-indices $\mu$ of length $k$, that is, $\# \mu = k$, and we use $B(\beta_j) = \dF\big(\delta(\beta_j)\big)$.
    This implies that $B_\Lambda (\alpha)$ can be expressed as a multi-index series $B(\beta \star \alpha)$ for some coefficient map $\beta \star \alpha : \N^\infty \to \R$ which we compute as follows,
    \begin{align*}
        (\beta \star \alpha) (\mu) &= \sum_{k=0}^\infty \sum_{\lambda \in \Lambda_k}^! \langle \delta(\beta_{\lambda_1}) \odot \cdots \odot \delta(\beta_{\lambda_k}) \ins \delta_k \big(\alpha(\blank, \lambda)\big), \mu \rangle_! \\
                                   &= \sum_{k=0}^\infty \sum_{\lambda \in \Lambda_k}^! \langle \delta(\beta_{\lambda_1}) \odot \cdots \odot \delta(\beta_{\lambda_k}) \otimes \delta_k \big(\alpha(\blank, \lambda)\big), \Delta_\ins (\mu) \rangle_! \\
                                   &= \sum_{(\mu)} \sum_{\lambda \in \Lambda_{\#\mu_{(2)}}}^! \beta_{\lambda_1} (\mu_{(1),1}) \cdots \beta_{\lambda_{\#\mu_{(2)}}} (\mu_{(1),\#\mu_{(2)}}) \alpha(\mu_{(2)}, \lambda) \,,
    \end{align*}
    and the statement follows.
\end{proof}

See Appendix \ref{app:order_conditions} for an application of the substitution law to the derivation of the order-six conditions for symmetric generalized modified splitting methods. In practice, some nested commutators of $A$ and $B$ may be expensive to compute. It may therefore be desirable to set $\beta_j$ to zero for such commutators. This reduces the number of degrees of freedom available for satisfying the order conditions and may consequently require an increase in the number of stages $s$ of the generalized modified splitting method.

However, \cite{BeauchardCTS26} proves that, in order to achieve order $2n$, $n \in \mathbb{N}$, with positive coefficients $a_j \geq 0$ and arbitrary $b_j$, $j=1,\ldots,s$, it is necessary to retain the commutators corresponding to $\mu=(j-1,j)$, $j=1,\ldots,n-1$, i.e., to require $\beta_j(\mu)\neq 0$. This particular choice of commutators is not unique, however. Other choices, or combinations of several nested commutators of the same orders $2j-1$, may also provide the degrees of freedom needed to achieve order $2n$.

\subsection{High-order generalized modified splitting methods}

We use the substitution law to develop a systematic procedure for constructing high-order generalized modified splitting methods from lower-order ones. The procedure consists of identifying the leading error term of a given generalized modified splitting method and incorporating it into any chosen operator $B_j$ in \eqref{eq:splitting_gen}. The leading error term of the resulting integrator is cancelled, thereby increasing its order.

\begin{theorem}
    \label{thm:modified_eq}
    Given a generalized modified splitting method of order $p$ with coefficients $a_j, b_j$ with $j=1,\dots,s$ and $B_j = \frac1h B(\beta_j)$. Choose $\hat{j} \in \{1, \dots, s\}$ such that $\beta_{\hat{j}} (\mu) = 0$ for all $|\mu| = p+1$, define,
    \begin{align*}
        \tilde{\beta}_{\hat{j}} (\mu) &:= \beta_{\hat{j}} (\mu) \,, & &|\mu| \leq p \,, \\
        \tilde{\beta}_{\hat{j}} (\mu) &:= \frac{1}{\alpha((0), \hat{j})} \Big( \frac{1}{\gamma(\mu)} - (\beta \star \alpha) (\mu) \Big) \,, & &|\mu| = p+1 \,.
    \end{align*}
    Then, replacing $\beta_{\hat{j}}$ by $\tilde{\beta}_{\hat{j}}$ in the generalized modified splitting method, we obtain a new generalized modified splitting method of order $p+1$.
\end{theorem}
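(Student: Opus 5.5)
The plan is to use the substitution law, Theorem \ref{thm:substitution}, and follow how $(\beta \star \alpha)(\mu)$ changes when $\beta_{\hat j}$ is replaced by $\tilde\beta_{\hat j}$.

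By the order conditions of Section \ref{sec:order_conditions}, the new method has order $p+1$ exactly when $(\tilde\beta \star \alpha)(\mu) = 1/\gamma(\mu)$ for all $|\mu| \leq p+1$. Here $\tilde\beta$ denotes the family $(\beta_1,\dots,\tilde\beta_{\hat j},\dots,\beta_s)$. So the whole proof reduces to computing $(\tilde\beta\star\alpha)(\mu) - (\beta\star\alpha)(\mu)$ for $|\mu|\le p+1$.

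First I would record a grading fact. By definition of $\Delta_\ins$, every term
\[ (\pi_1 \minus p_1)\odot\cdots\odot(\pi_k\minus p_k)\otimes p_\Sigma \]
satisfies
\[ \sum_i |\pi_i \minus p_i| + |p_\Sigma| = |\mu| . \]
To see this, note that $|\pi_i\minus p_i| = \#\pi_i + \sum_j(\pi_{i,j}-p_{i,j})$ and $|p_\Sigma| = k + \sum_{i,j} p_{i,j}$. Hence $\sum_i|\pi_i\minus p_i| + |p_\Sigma| = |\mu| + k$, which looks off by $k$. This is the step I expect to be the main obstacle. It is resolved by the normalization $B_j = \frac1h B(\beta_j)$: the factor $h^{-1}$ per inserted $B_j$ compensates the extra $h$ contributed by each letter of $p_\Sigma$. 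So I would carefully check that the effective grading is $\sum_i (|\eta_i|-1) + |p_\Sigma| = |\mu|$ with $|\eta_i|\ge 1$. In particular, every factor $\beta_{\lambda_i}(\eta_i)$ has $|\eta_i| \le |\mu| - |p_\Sigma| + 1$.

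Next I would identify which terms see the change. For $|\mu|\le p$, every $\eta_i$ occurring has $|\eta_i|\le p$, because $|p_\Sigma|\ge 1$ whenever $k\ge 1$. On that range $\tilde\beta_{\hat j}=\beta_{\hat j}$, so $(\tilde\beta\star\alpha)(\mu) = (\beta\star\alpha)(\mu) = 1/\gamma(\mu)$ by the order-$p$ assumption. For $|\mu|=p+1$, a factor with $|\eta_i| = p+1$ forces $|p_\Sigma|=1$. This means $k=1$ and $p_\Sigma = (0)$, so the only term is the one-part partition $\pi_1=\mu$, $p_1=0$, with coefficient $1$. That term equals $\sum_{\lambda\in\Lambda_1}\beta_{\lambda}(\mu)\,\alpha((0),\lambda)$, and the symmetric-sum weight is $1$ for $k=1$. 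All other terms involve only $|\eta_i|\le p$ and are unchanged.

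Since $\beta_{\hat j}(\mu)=0$ for $|\mu|=p+1$ by the choice of $\hat j$, I obtain
\[ (\tilde\beta\star\alpha)(\mu) = (\beta\star\alpha)(\mu) + \tilde\beta_{\hat j}(\mu)\,\alpha((0),\hat j) . \]
Substituting the definition of $\tilde\beta_{\hat j}(\mu)$ gives exactly $1/\gamma(\mu)$, which completes the argument. The implicit assumption $\alpha((0),\hat j) = b_{\hat j} \neq 0$ should be stated.

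Finally, I would remark that $\tilde\beta_{\hat j}$ must still be an infinitesimal character for $B_{\hat j}$ to be a Lie element. This holds because the defect $1/\gamma - \beta\star\alpha$ at the lowest nonvanishing degree $p+1$ is the difference of two characters that agree below that degree. Such a difference is an infinitesimal character in that degree, so $\tilde\beta_{\hat j}$ is one as well.
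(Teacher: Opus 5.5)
Your proposal is correct and follows the paper's own proof. You isolate the one-part term $\mu\otimes(0)$ of $\Delta_\ins(\mu)$, observe that all remaining terms involve only values of $\beta$ at degree at most $p$, and then use $\beta_{\hat j}(\mu)=0$ together with the definition of $\tilde\beta_{\hat j}$. Several points you make explicit are left implicit in the paper: the bound $|\eta_i|\le|\mu|-|p_\Sigma|+1$, the requirement $b_{\hat j}\neq 0$, and the check that $\tilde\beta_{\hat j}$ remains an infinitesimal character. Note that the bound follows directly from the combinatorial identity $\sum_i|\eta_i|+|p_\Sigma|=|\mu|+k$, so no appeal to the $h^{-1}$ normalization is needed.
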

\begin{proof}
    The new generalized modified splitting method is of order $p$ due to $\tilde{\beta}_{\hat{j}} (\mu) = \beta_{\hat{j}} (\mu)$ for $|\mu| \leq p$. Let $\tilde\beta = (\beta_1, \dots, \tilde{\beta}_{\hat{j}}, \dots, \beta_s)$. It remains to check that $(\tilde\beta \star \alpha) (\mu) = 1/\gamma(\mu)$ for $|\mu| = p+1$. Using the definition of the substitution law and the notation
    \[ \Delta_\ins (\mu) = \sum_{(\mu)} \mu_{(1)} \otimes \mu_{(2)} = \mu \otimes \one + \sum_{\widetilde{(\mu)}} \mu_{(1)} \otimes \mu_{(2)} \,, \]
    we have,
    \begin{align*}
        (\tilde\beta \star \alpha) (\mu) &= \sum_{(\mu)} \sum^!_{\lambda \in \Lambda_{\#\mu_{(2)}}} \tilde{\beta}_\lambda (\mu_{(1)}) \alpha(\mu_{(2)}, \lambda) \\
                                         &= \tilde{\beta}_{\hat{j}} (\mu) \alpha((0), \hat{j}) + \sum_{\substack{j=1\\ j\neq \hat{j}}}^s \beta_j (\mu) \alpha((0), j) +  \sum_{\widetilde{(\mu)}} \sum^!_{\lambda \in \Lambda_{\#\mu_{(2)}}} \beta_\lambda (\mu_{(1)}) \alpha(\mu_{(2)}, \lambda)
                                         \intertext{using the assumption $\beta_{\hat{j}} (\mu) = 0$, we obtain}
                                         &= \tilde{\beta}_{\hat{j}} (\mu) \alpha((0), \hat{j}) + (\beta \star \alpha) (\mu) = \frac{1}{\gamma(\mu)} \,.
    \end{align*}
    This finishes the proof.
\end{proof}

The process described in Theorem \ref{thm:modified_eq} can be iterated multiple times to derive high-order integrators. Starting with a symmetric generalized modified splitting method of order $p$ and odd $s$, we can derive a new symmetric generalized modified splitting method of order $p+2$ by applying the process described in Theorem \ref{thm:modified_eq} with $\hat{j} = (s+1)/2$.

This is similar to the modified equation technique used in the classical Butcher series theory, see \cite{ChartierASB10a}, to derive high-order integrators by modifying the vector field of the problem such that the leading error term of the integrator is cancelled.

\subsubsection{Sixth-order generalized modified splitting method}
\label{sec:high_order_chin}

In this subsection, we use the procedure outlined in Theorem \ref{thm:modified_eq} to derive a sixth-order generalized modified splitting method. We do so by starting with the symmetric splitting method of order $2$ given by
\begin{equation}
    \label{eq:classic_splitting}
    u_{n+1} = e^{\frac16 h B} e^{\frac12 hA} e^{\frac23 hB} e^{\frac12 hA} e^{\frac16 h B} u_n \,.
\end{equation}
The splitting method \eqref{eq:classic_splitting} is taken from the seminal work of Chin \cite{ChinSIC97}.
To apply Theorem \ref{thm:modified_eq}, we need to write the splitting method \eqref{eq:classic_splitting} as a generalized modified splitting method. We do so by defining the coefficients $a_j, b_j$ for $j=1,\dots,3$ and the coefficient maps $\beta_j$ for operators $B_j = \frac1h B(\beta_j)$ as follows,
\[ a_1 = 0 \,, \quad a_2 = \frac12 \,, \quad a_3 = \frac12 \,, \quad b_1 = \frac16 \,, \quad b_2 = \frac23 \,, \quad b_3 = \frac16 \,, \]
\[ \beta_1 (0) = 1 \,, \quad \beta_2(0) = 1 \,, \quad \beta_3 (0) = 1 \,, \]
with $\beta_j (\mu) = 0$ for all the other multi-indices $\mu \in \N^\infty$ not listed above.

We apply Theorem \ref{thm:modified_eq} with $\hat{j} = 2$ to derive a generalized modified splitting method of order $4$ which coincides with Chin splitting method \cite{ChinSIC97}. We note that the coefficients $a_j, b_j$ in \eqref{eq:classic_splitting} are chosen such that the order condition corresponding to the multi-index $(2)$ is satisfied, that is,
\[ (\beta \star \alpha) (2) = \frac1{\gamma(2)} \,. \]
This implies that when we compute the new coefficient map $\tilde{\beta}_2$ for $|\mu| = 3$ we obtain,
\[ \tilde{\beta}_2 (2) = 0 \,, \quad \tilde{\beta}_2(0,1) = \frac1{48} \,. \]
The generalized modified splitting method of order $4$ thus obtained is then given by,
\[ u_{n+1} = e^{\frac16 h B} e^{\frac12 hA} e^{\frac23 h(B + \frac1{48} h^2 [B,[A,B]])} e^{\frac12 hA} e^{\frac16 h B} u_n \,. \]
and is indeed the splitting method with modified potential introduced in \cite{ChinSIC97}.

We go further and apply Theorem \ref{thm:modified_eq} again with $\hat{j} = 2$ to derive a generalized modified splitting method of order $6$. The new coefficient map $\tilde{\beta}_2$ for $|\mu| = 5$ is then given by,
\begin{align*}
    \tilde\beta_2 (4) &= - \frac1{80} \,, \quad &\tilde\beta_2 (0,3) &= - \frac7{960} \,, \\
    \tilde\beta_2 (1,2) &= - \frac1{960} \,, \quad &\tilde\beta_2 (0,0,2) &= - \frac7{4320} \,, \\
    \tilde\beta_2 (0,1,1) &= \frac1{2160} \,, \quad &\tilde\beta_2 (0,0,0,1) &= - \frac{41}{103680} \,.
\end{align*}
The new generalized modified splitting method of order $6$ thus obtained is then given by,
\[ u_{n+1} = e^{\frac16 h B} e^{\frac12 hA} e^{\frac23 h\tilde{B}} e^{\frac12 hA} e^{\frac16 h B} u_n \,, \]
where
\begin{align*}
    \tilde{B} = B &+ \frac1{48} h^2 [B,[A,B]] - \frac1{80 \cdot 24} h^4 [A, [A, [A, [A, B]]]] - \frac7{960 \cdot 6} h^4 [B, [A, [A, [A, B]]]] \\
                  &- \frac1{960 \cdot 2} h^4 [[A,B], [A, [A, B]]] - \frac7{4320 \cdot 2} h^4 [B, [B, [A, [A, B]]]] \\
                  &+ \frac1{2160} h^4 [[B, [A, B]], [A, B]] - \frac{41}{103680} h^4 [B, [B, [B, [A, B]]]] \,.
\end{align*}

The order of the new generalized modified splitting method is confirmed in Figure \ref{fig:order_confirmation}.

\begin{figure}[t]
    \centering
    \includegraphics[width=0.6\textwidth]{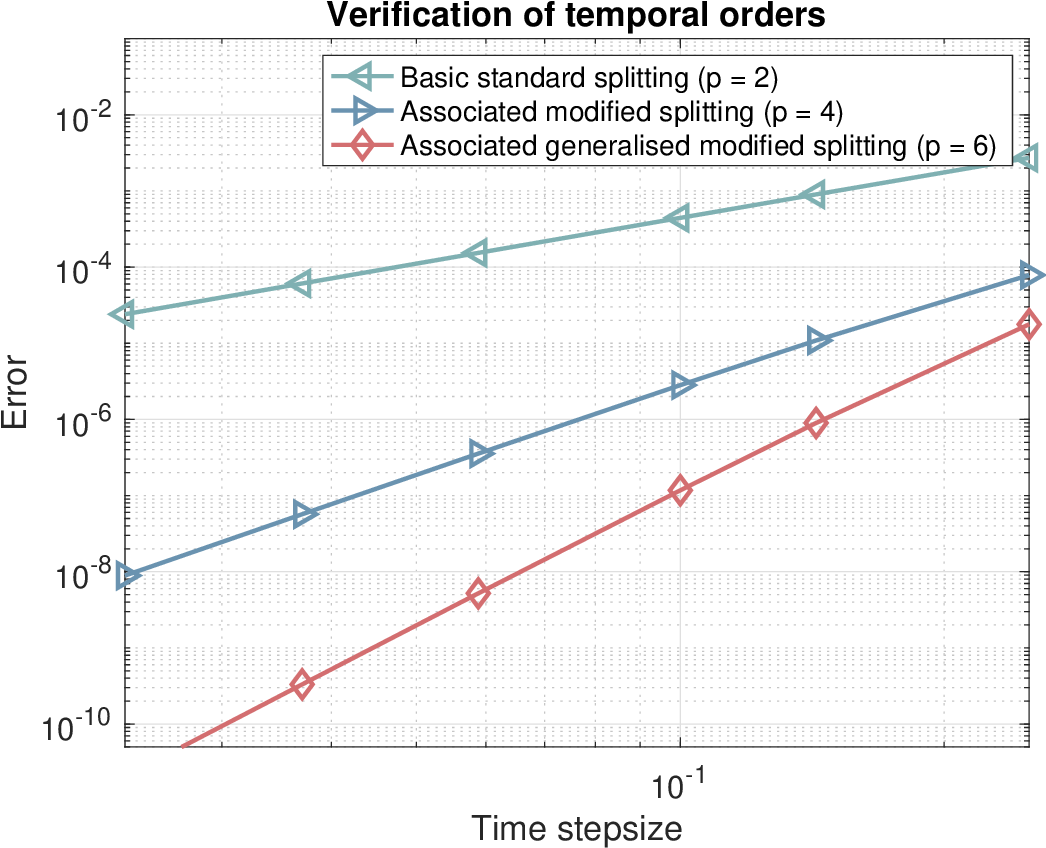}
    \caption{Order confirmation of the generalized modified splitting method of order $6$ derived in section \ref{sec:high_order_chin}. The figure was produced using an elementary Matlab script\textsuperscript{(a)} \cite{python_scripts} for a linear ordinary differential equation defined by two square matrices.}
    \label{fig:order_confirmation}
    \small\textsuperscript{(a)} \texttt{\url{https://zenodo.org/records/21976560}}.
\end{figure}

\subsection{Automatic computation}

Although the multi-index series formalism greatly simplifies the derivation of the order conditions found in Appendix \ref{app:order_conditions} and the construction of the order-six generalized modified splitting method in Section \ref{sec:high_order_chin}, these calculations still involve a substantial amount of tedious computation. We have therefore developed a collection of Python scripts that implement the substitution law and automate both the verification of order conditions and the construction of high-order generalized modified splitting methods. The scripts \cite{python_scripts} are available at
\begin{center}
\texttt{\url{https://zenodo.org/records/21976560}}.
\end{center}

The simplicity of the multi-index formalism allows the scripts to be implemented using only the standard Python library. The scripts are thoroughly documented and can be readily adapted to the user's specific needs.

To run the scripts, open a terminal in the directory containing the scripts and start the Python interactive interpreter by entering \texttt{python}. The full list of commands which lead to the construction of sixth-order method introduced in Section \ref{sec:high_order_chin} can be found in Appendix \ref{app:python}.

\section{Relation with other Butcher series formalisms}
\label{sec:Butcher}

We now describe the relation between the multi-index series formalism introduced in this paper and Butcher series formalisms that have appeared in the literature.
The theory of Butcher series was introduced in 1970s by Ernst Hairer and Gerhard Wanner \cite{hairerButcherGroupGeneral1974} following the seminal work of John Butcher \cite{butcherCoefficientsStudyRungeKutta1963} who used the formalism of rooted trees to perform systematic convergence order analysis of Runge--Kutta methods.
Consider an ordinary differential equation in $\R^d$ of the following form
\begin{equation}
    \label{eq:general_ode}
    y^\prime = f(y), \quad f : \R^d \to \R^d, \quad y(0) = y_0.
\end{equation}
The solution $y(h)$ as well as the approximation $y_1$ computed using a Runge--Kutta method can be Taylor expanded around $h=0$. It was noted by Cayley in 1857 \cite{CayleyXTA57} that the terms (also called \emph{elementary differntials}) appearing in such Taylor expansions can be represented by non-planar trees. 
Let $T$ be the set of non-planar rooted trees as defined in Definition \ref{def:np_tree}.

\begin{definition}
    \label{def:np_tree}
    A \emph{non-planar tree} $\tau \in T$ is a connected directed acyclic graph with vertices $V(\tau)$ and edges $E(\tau)$ such that all vertices $v \in V(\tau)$ have at most one outgoing edge. Connectedness implies that there is exactly one vertex without an outgoing edge and this vertex is called the \emph{root}.
\end{definition}

All non-planar trees up to size $4$ can be found below,
\[ \forestA \,, \quad \forestB\,, \quad \forestC\,, \quad \forestD\,, \quad \forestE\,, \quad \forestF\,, \quad \forestG\,, \quad \forestH\,. \]
Let a tree $\tau$ with branches $\tau_1, \dots, \tau_n$ be denoted by $B^+(\tau_1, \dots, \tau_n)$, for example,
\[ \forestI = B^+(\one) \,, \quad \forestJ = B^+(\forestK) \,, \quad \forestL = B^+(\forestM) \,, \]
where $\one$ denotes the empty set of branches.
The correspondence between non-planar trees and elementary differentials is denoted by a map $\dF_f$ defined in Definition \ref{def:F_np}

\begin{definition}
    \label{def:F_np}
    Let $\tau \in T$ be a non-planar tree with $\tau = B^+(\tau_1 \cdots \tau_n)$ for some $\tau_i \in T$. Then, the map $\dF_f$ is defined as
    \begin{align*}
        \dF_f (\tau) &:= \sum_{i_1, \dots, t_n =1}^d \dF_f (\tau_1)^{i_1} \cdots \dF_f (\tau_n)^{i_n} \frac{\partial^n}{\partial x_{i_1}\cdots \partial x_{i_n}} f \\
                    &= f^{(n)} \big(\dF_f (\tau_1), \dots, \dF_f (\tau_n)\big).
    \end{align*}
\end{definition}

Some examples of elementary differentials are,
\[ \dF_f(\forestN) = f \,, \quad \dF_f(\forestO) = \sum_{i,j,k=1}^d f^i f^k \partial_i f^j \partial_{j,k} f = f^{(2)} (f, f^\prime f).  \]

Let $\sigma : T \to \R$ denote the symmetry of a tree defined as the number of automorphisms of the tree.
Given a tree $\tau = B^+(\tau_1^{r_1} \cdots \tau_n^{r_n})$ where all $\tau_i$ are distinct with multiplicities $r_i$ for $i=1,\dots,n$, we have the following recursive formula,
\[ \sigma(\tau) = r_1! \cdots r_n! \sigma(\tau_1) \cdots \sigma(\tau_n) \,. \]
Let $|\tau|$ denote the number of vertices of a tree $\tau$, for example $|\forestP| = 5$.

\begin{definition}
    \label{def:Bseries_np}
    Let $a : T \to \R$ be a \emph{coefficient map}, then, a \emph{Butcher series} is defined as
    \[ B_f (a)(y_0) := \sum_{\tau \in T} h^{|\tau|} \frac{a(\tau)}{\sigma(\tau)} \dF_f(\tau)(y_0) \,, \]
    with the coefficient map $a$ characterizing the Butcher series.
\end{definition}

The Taylor expansions of the exact solution and of Runge--Kutta methods are written using Butcher series as
\[ y(h) = y_0 + B_f(1/\gamma)(y_0), \quad \Phi_h (y_0) = y_0 + B_f(a) (y_0), \]
with appropriately defined coefficient map $a : T \to \R$ and the map $\gamma$ being the factorial of a tree $\tau = B^+(\tau_1 \cdots \tau_n)$ defined as
\[ \gamma(\tau) = |\tau| \cdot \prod_{i=1}^n \gamma(\tau_i) \,, \quad \gamma(\forestQ) = 1 \,. \]

\subsection{Word series}

Word series were introduced in \cite{MuruaHAR06,MuruaWSD17,Sanz-SernaFSN15} as a simplification of the classical Butcher series formalism for differential equations of the form
\[ y^\prime = f_a(y) + f_b(y), \]
where $f_a$ and $f_b$ are vector fields. Word series are indexed by words over the alphabet ${a,b}$, and their coefficient maps are functions $a : W \to \R$, where $W$ denotes the set of all words over ${a,b}$.
Given a coefficient map $\alpha : W \to \R$, the corresponding word series is defined by
\[ W(\alpha) := \sum_{w \in W} \alpha(w) \dF(w), \]
where the elementary differential associated with a word $w=w_1\cdots w_k$ is defined recursively by
\[ \dF(w_1 w_2 \cdots w_k) = \partial_x \dF(w_2 \cdots w_k) \cdot \dF(w_1), \]
with $\dF(a)=f_a$ and $\dF(b)=f_b$.

Word series may be viewed as a particular subclass of Butcher series over bicoloured rooted trees. In particular, Butcher series over bicoloured trees can represent the Taylor expansions of a large class of integrators which includes Runge--Kutta methods, splitting methods, and many others, whereas word series are restricted to representing the Taylor expansions of splitting methods.

The distinguishing feature of multi-index series is that they do not rely on a Taylor expansion of the operator $e^{hA}$. Consequently, the operator $A$ may be unbounded, as is the case for differential operators such as the Laplacian.
Therefore, multi-index series are better suited to the analysis of splitting methods for partial differential equations.

\subsection{Frozen-flow methods on Lie groups}

Let $M$ be a manifold and $TM := \bigsqcup_{p \in M} T_p M$ denote its tangent bundle with $T_p M$ being the tangent space at $p \in M$. Let $F$ be a vector field which maps each point $p \in M$ to a vector $F(p) \in T_p M$.
Assume $E_1, \dots, E_n$ are smooth vector fields on $M$ such that
\[ F(p) = \sum_{i=1}^n f_i(p) E_i(p) \,, \]
for some smooth functions $f_i : M \to \R$ for $i=1,\dots,n$.
Consider the following ordinary differential equation on $M$ with $F : M \to TM$,
\[ y^\prime = F(y) \,, \quad y(0) = y_0 \in M \,. \]
Let \emph{frozen vector field} $F_p$ at $p \in M$ be defined by
\[ F_p := \sum_{i=1}^n f_i(p) E_i \,. \]
Given a vector field $F$ (possibly frozen), let $e^{tF}$ denote the flow of $F$ at time $t$. A frozen-flow method \cite{CrouchNIO93,OwrenRMA99} is a numerical integrator of the form
\begin{align*}
    p &= y_n \\
    Y_r &= e^{ha_{rs}F_p^s} \cdots e^{ha_{r1}F_p^1} p \quad\quad\quad\quad r = 1, \dots, s \\
    F_p^r &= F_{Y_r} = \sum_{i=1}^n f_i(Y_r) E_i \quad\quad\quad\quad r = 1, \dots, s \\
    y_{n+1} &= e^{hb_s F_p^s} \cdots e^{hb_1 F_p^1} p \,.
\end{align*}

Note that frozen-flow methods reduce to classical Runge--Kutta methods when $M = \R^d$ and $E_i$ are the canonical basis of $\R^d$ since then $e^{tF}p = e^{tF_p}p = p + tF(p)$. Convergence order analysis for frozen-flow methods is performed using planar tree formalism where, in contrast to the non-planar trees, the order of the children of each vertex matters. For example,
\[ \forestR \quad \neq \quad \forestS \,. \]
Let $PT$ denote the set of planar trees with the size $|\tau|$ of a tree $\tau \in PT$ given by the number of vertices. The order $p$ conditions for frozen-flow methods are indexed by planar trees and are given by
\[ a(\tau) = \frac1{\gamma(\tau)} \,, \quad \tau \in PT \,, |\tau| \leq p \,,\]
where $a(\tau)$ and $1/\gamma(\tau)$ are coefficients coming from the expansions of the frozen-flow method and the exact solution.

Let $ht(\tau)$ denote the height of a tree $\tau \in PT$, that is, the maximal length of a path from a leaf to the root. For example,
\[ ht(\forestT) = 0 \,, \quad ht(\forestU) = 2 \,, \quad ht(\forestV) = 3 \,, \]
then, multi-indices $\mu \in \MM$ considered in this paper are in one-to-one correspondence with planar trees $\tau \in PT$ such that $ht(\tau) \leq 2$. For example,
\[ \one \leftrightarrow \forestW \,, \quad (0,0,1) \leftrightarrow \forestX \,, \quad (1,0,2) \leftrightarrow \forestY \,. \]
The order conditions discussed in Section \ref{sec:order_conditions} are a subset of the order conditions for frozen-flow methods where the reduced order conditions discussed in Section 5 of \cite{OwrenRMA99} correspond to the order conditions indexed by Lyndon multi-indices.
A discussion of the relation between frozen-flow and splitting methods is shortly discussed in Section 9.2 of \cite{BlanesSMD24}.

\subsection{Multi-index Butcher series for regularity structures}

Multi-index Butcher series were introduced in the context of regularity structures as an alternative to the decorated-tree formalism used to describe local expansions of solutions of singular SPDEs; see \cite{BrunedKatsetsiadis23,BrunedDotsenko24}. In this setting, multi-indices provide a compact encoding of decorated rooted trees and allow one to reformulate several algebraic constructions arising in regularity structures in terms of Novikov-type algebras.

In \cite{BrunedMB25}, multi-index Butcher series are interpreted as a special case of the classical Butcher series when the dimension of the problem \eqref{eq:general_ode} is $d=1$. In this setting, a rooted tree $\tau$ is encoded by a multi-index
\[ (r_0,\ldots,r_l) \,, \]
where $l\in\N$ and $r_i$ denotes the number of vertices having exactly $i$ children, for $i=0,\ldots,l$. The corresponding basis elements are represented by monomials in the variables $z$,
\begin{equation}
    \label{eq:z_monomial}
    z^r := \prod_{k\in\N} z_k^{r_k} \,, \quad \text{for } z = (z_0,z_1,\ldots) \,.
\end{equation}

There is, however, an important difference in terminology between \cite{BrunedMB25} and the present work. In \cite{BrunedMB25}, the term \emph{multi-index} refers to the exponent vector $(r_0,r_1,\ldots,r_l)$, and the degree of the corresponding monomial is
\[ |r|=\sum_{k\ge0} r_k \,. \]
In contrast, we use the term \emph{multi-index} to refer to an ordered tuple
\[ \mu=(\mu_1,\ldots,\mu_l), \qquad \mu_i\in\N \,, \]
whose size is given by
\[ |\mu| = l+\sum_{i=1}^{l}\mu_i \,. \]
The correspondence between the two descriptions is given by the map
\[ \varphi : (\mu_1,\ldots,\mu_l) \longmapsto z_{\mu_1}\cdots z_{\mu_l} \,. \]
Since the variables $z_k$ commute, collecting equal indices $\mu_i$ yields a monomial of the form \eqref{eq:z_monomial}. More precisely, if $r_k$ denotes the multiplicity of $k$ in $\mu$, then
\[ \varphi(\mu)=\prod_{k\ge0} z_k^{r_k} \,. \]

The composition and substitution laws for multi-index Butcher series are described in \cite{BrunedMB25,ZhuFNA24} through the Novikov product
\[ z^r \graft z^{\tilde r} = z^r d(z^{\tilde r}) \,, \]
where $d$ is the derivation determined by
\[ d z_k = z_{k+1}, \qquad k\in\N \,, \]
and extended to arbitrary polynomials by the Leibniz rule
\[ d(z^r z^{\tilde r}) = (dz^r)z^{\tilde r} + z^r(dz^{\tilde r}) \,. \]

The corresponding derivation on multi-indices is obtained by setting
\[ d(\mu)=\mu \plus (1) \,, \]
for $\mu \in \N^1$ and extended by
\[ d(\mu\cdot\tilde\mu) = (d\mu)\cdot\tilde\mu + \mu\cdot(d\tilde\mu)\,, \]
The product $\graft$ introduced in Section~\ref{sec:composition} can then be written as
\[ \mu\graft\tilde\mu = \mu \exp(d) \tilde\mu \,. \]
If we define the infinitesimal product
\[ \mu\widetilde{\graft}\tilde\mu = \left.\frac{d}{dt}\right|_{t=0} \mu \exp(td) \tilde\mu \,, \]
then the map $\varphi$ is an algebra homomorphism,
\[ \varphi(\mu\widetilde{\graft}\tilde\mu) = \varphi(\mu)\graft\varphi(\tilde\mu) \,, \]
with the kernel of $\varphi$ given by the ideal generated by the commutators $[\mu,\tilde\mu] = \mu\cdot\tilde\mu - \tilde\mu\cdot\mu$. The detailed algebraic study is left for future work. Moreover, note that the monomials $z^r$ arise as a reduction of rooted non-planar trees in $d=1$ case. Whether $\mu \in \N^\infty$ arise as a reduction of rooted planar trees or forests is an open question which could be answered by studying the \emph{arborification} morphism introduced in \cite{BrunedDNF24}.

\section*{Acknowledgements}

The authors would like to thank Adrien Busnot Laurent for helpful discussions. EB has been supported by the Knut and Alice Wallenberg Foundation (grant number KAW 2023.0433). MT has been supported by the Austrian Science Fund FWF through a stand-alone project (grant-doi 10.55776/PAT1281625).

\appendix

\section{Order $6$ conditions for symmetric generalized modified splitting methods}
\label{app:order_conditions}

Let us compute the order conditions for symmetric generalized modified splitting methods by applying the formula from Theorem \ref{thm:substitution}. A generalized modified splitting method has local order $p$ if $(\beta \star \alpha) (\mu) = (1/\gamma)(\mu)$ for $\mu \in \N^\infty, |\mu| \leq p$ where $B_j = \frac{1}{h}B(\beta_j)$ in \eqref{eq:splitting_gen}.

\begin{center}
    \textbf{Assumptions}
\end{center}
\[ \sum_{i=1}^s a_i = 1 \,, \quad \beta_j(0) = 1 \,, \quad \text{for } j=1,\dots,s \,. \]

\begin{center}
    \textbf{Order 1, 2}
\end{center}
\[ (\beta \star \alpha)(0) = \alpha(0) = 1 \,. \]

\begingroup
\allowdisplaybreaks

\begin{center}
    \textbf{Order 3, 4}
\end{center}
\begin{align*}
    (\beta \star \alpha) (0,1) &= \sum_{\lambda \in \Lambda_1}^! \beta_{\lambda_1} (0,1) \alpha(0, \lambda) + \sum_{\lambda \in \Lambda_2}^! \beta_{\lambda_2} (1) \alpha(0,0, \lambda) + \alpha (0,1) = \frac{1}{3} \,, \\
    (\beta \star \alpha) (2) &= \sum_{\lambda \in \Lambda_1}^! \beta_{\lambda_1} (2) \alpha(0, \lambda) + 2 \sum_{\lambda \in \Lambda_1}^! \beta_{\lambda_1} (1) \alpha (1, \lambda) + \alpha (2) = \frac{1}{3} \,.
\end{align*}

\begin{center}
    \textbf{Order 5, 6}
\end{center}
\begin{align*}
    (\beta \star \alpha) (0,0,0,1) &= \sum_{\lambda \in \Lambda_1}^! \beta_\lambda (0,0,0,1) \alpha(0, \lambda) + \sum_{\lambda \in \Lambda_2}^! \beta_{\lambda_2} (0,0,1) \alpha(0,0, \lambda) \\
                                   &\quad \quad + \sum_{\lambda \in \Lambda_3}^! \beta_{\lambda_3} (0,1) \alpha (0,0,0, \lambda) + \sum_{\lambda \in \Lambda_4}^! \beta_{\lambda_4} (1) \alpha (0,0,0,0, \lambda) \\
                                   &\quad \quad + \alpha (0,0,0,1) = \frac{1}{30} \,, \\
    (\beta \star \alpha) (0,0,2) &= \sum_{\lambda \in \Lambda_1}^! \beta_{\lambda_1} (0,0,2) \alpha (0, \lambda) + 2 \sum_{\lambda \in \Lambda_1}^! \beta_{\lambda_1} (0,0,1) \alpha (1, \lambda) \\
                                               &\quad \quad + \sum_{\lambda \in \Lambda_2}^! \beta_{\lambda_2} (0,2) \alpha (0,0, \lambda) + 2 \sum_{\lambda \in \Lambda_2}^! \beta_{\lambda_2} (0,1) \alpha (0,1, \lambda) \\
                                               &\quad \quad + \sum_{\lambda \in \Lambda_3}^! \beta_{\lambda_3} (2) \alpha (0,0,0, \lambda) + 2 \sum_{\lambda \in \Lambda_3}^! \beta_{\lambda_3} (1) \alpha (0,0,1, \lambda) \\
                                               &\quad \quad + \alpha (0,0,2) = \frac{1}{10} \,, \\
    (\beta \star \alpha) (0,1,1) &= \sum_{\lambda \in \Lambda_1}^! \beta_{\lambda_1} (0,1,1) \alpha (0, \lambda) + \sum_{\lambda \in \Lambda_2}^! \beta_{\lambda_1} (0,1) \beta_{\lambda_2} (1) \alpha (0, 0, \lambda) \\
                                                &\quad \quad  + \sum_{\lambda \in \Lambda_2}^! \beta_{\lambda_1} (0,1) \alpha (0,1, \lambda) + \sum_{\lambda \in \Lambda_3}^! \beta_{\lambda_2} (1) \beta_{\lambda_3} (1) \alpha (0,0,0, \lambda) \\
                                                &\quad \quad  + \sum_{\lambda \in \Lambda_3}^! \beta_{\lambda_3} (1) \alpha (0,1,0, \lambda) + \sum_{\lambda \in \Lambda_3}^! \beta_{\lambda_2} (1) \alpha (0,0,1, \lambda) \\
                                                &\quad \quad + \alpha (0,1,1) = \frac{1}{15} \,, \\
    (\beta \star \alpha) (0,3) &= \sum_{\lambda \in \Lambda_1}^! \beta_{\lambda_1} (0,3) \alpha (0, \lambda) + 3 \sum_{\lambda \in \Lambda_1}^! \beta_{\lambda_1} (0,2) \alpha (1, \lambda) \\
                               &\quad \quad + 3 \sum_{\lambda \in \Lambda_1}^! \beta_{\lambda_1} (0,1) \alpha (2, \lambda_1) + \sum_{\lambda \in \Lambda_2}^! \beta_{\lambda_2} (3) \alpha (0,0, \lambda) \\
                               &\quad \quad + 3 \sum_{\lambda \in \Lambda_2}^! \beta_{\lambda_2} (2) \alpha (0,1, \lambda) + 3 \sum_{\lambda \in \Lambda_2}^! \beta_{\lambda_2} (1) \alpha (0,2, \lambda) \\
                               &\quad \quad + \alpha (0,3) = \frac{1}{5} \,, \\
    (\beta \star \alpha) (1,2) &= \sum_{\lambda \in \Lambda_1}^! \beta_{\lambda_1} (1,2) \alpha (0, \lambda) + \sum_{\lambda \in \Lambda_1}^! \beta_{\lambda_1} (0,2) \alpha (1, \lambda) \\
                               &\quad \quad + \sum_{\lambda \in \Lambda_1}^! \beta_{\lambda_1} (0,1) \alpha (2, \lambda) + \sum_{\lambda \in \Lambda_2}^! \beta_{\lambda_1} (1) \beta_{\lambda_2} (2) \alpha (0,0, \lambda) \\
                               &\quad \quad + \sum_{\lambda \in \Lambda_2}^! \beta_{\lambda_2} (2) \alpha (1,0, \lambda) + 2 \sum_{\lambda \in \Lambda_2}^! \beta_{\lambda_1} (1) \beta_{\lambda_2} (1) \alpha (0,1, \lambda) \\
                               &\quad \quad + 2 \sum_{\lambda \in \Lambda_2}^! \beta_{\lambda_2} (1) \alpha (1,1, \lambda) + \sum_{\lambda \in \Lambda_2}^! \beta_{\lambda_1} (1) \alpha (0,2, \lambda) \\
                               &\quad \quad + \alpha (1,2) = \frac{1}{10} \,, \\
    (\beta \star \alpha) (4) &= \sum_{\lambda \in \Lambda_1}^! \beta_{\lambda_1} (4) \alpha (0, \lambda) + 4 \sum_{\lambda \in \Lambda_1}^! \beta_{\lambda_1} (3) \alpha (1, \lambda) \\
                             &\quad \quad + 6 \sum_{\lambda \in \Lambda_1}^! \beta_{\lambda_1} (2) \alpha (2, \lambda) + 4 \sum_{\lambda \in \Lambda_1}^! \beta_{\lambda_1} (1) \alpha (3, \lambda) + \alpha (4) = \frac{1}{5} \,.
\end{align*}

\endgroup

\section{Automatic Construction of Sixth-Order Method}
\label{app:python}

The following commands were run in the interactive Python interpreter to compute the coefficients of the nested commutators defining the sixth-order generalized modified splitting method introduced in Section \ref{sec:high_order_chin}. Python version \texttt{3.13.4}.

{
\scriptsize
\begin{verbatim}
>>> import Modified
>>> Modified.classic_chin.print()
Stages:  3
b:  [Fraction(1, 6), Fraction(2, 3), Fraction(1, 6)]
a:  [Fraction(0, 1), Fraction(1, 2), Fraction(1, 2)]
beta:
[{'[0]': Fraction(1, 1)}, {'[0]': Fraction(1, 1)}, {'[0]': Fraction(1, 1)}]

>>> Modified.classic_chin.verify_order()
      MULTI-INDEX        EXACT           SPLIT           ERROR
### ORDER  1
[v]       [0]              1               1               0
### ORDER  2
[v]       [1]             1/2             1/2              0
### ORDER  3
[v]       [2]             1/3             1/3              0
[ ]     [0, 1]            1/3            23/72           1/72
...
-> ACHIEVED ORDER  2

>>> chin = Modified.classic_chin.increase_order(1)
Stages:  3
b:  [Fraction(1, 6), Fraction(2, 3), Fraction(1, 6)]
a:  [Fraction(0, 1), Fraction(1, 2), Fraction(1, 2)]
beta:
[{'[0]': Fraction(1, 1)},
 {'[0, 1]': Fraction(1, 48), '[0]': Fraction(1, 1)},
 {'[0]': Fraction(1, 1)}]

>>> high_chin = chin.increase_order(1)
Stages:  3
b:  [Fraction(1, 6), Fraction(2, 3), Fraction(1, 6)]
a:  [Fraction(0, 1), Fraction(1, 2), Fraction(1, 2)]
beta:
[{'[0]': Fraction(1, 1)},
 {'[0, 0, 0, 1]': Fraction(-41, 103680),
  '[0, 0, 2]': Fraction(-7, 4320),
  '[0, 1, 1]': Fraction(1, 2160),
  '[0, 1]': Fraction(1, 48),
  '[0, 3]': Fraction(-7, 960),
  '[0]': Fraction(1, 1),
  '[1, 2]': Fraction(-1, 960),
  '[4]': Fraction(-1, 80)},
 {'[0]': Fraction(1, 1)}]

>>> high_chin.verify_order()
...
-> ACHIEVED ORDER  5
\end{verbatim}
}

Note that we omitted unnecessary output for brevity. Moreover, since the obtained method is symmetric, the achieved order is $6$. The scripts \cite{python_scripts} are available at
\begin{center}
\texttt{\url{https://zenodo.org/records/21976560}}.
\end{center}

\bibliographystyle{emss}
\bibliography{references}

\end{document}